\documentclass[fleqn]{amsart}
\usepackage{amsmath}
\usepackage{amssymb, amstext, amsthm, amsfonts,euscript,amscd,mathrsfs}
\usepackage[colorlinks=false]{hyperref}
%
\usepackage[left=3.2cm,top=3cm,right=3.3cm,bottom=1.4in,asymmetric]{geometry}
\DeclareMathOperator{\Real}{\mathbb{R}}

\DeclareMathOperator{\sign}{sign}
\DeclareMathOperator{\esssup}{ess\sup}

\newcommand{\N}{\mathbb{N}}
\newcommand{\Schr}{Schr\"{o}dinger}
\newcommand{\jap}[1]{\!\left<#1\right>}

\newcommand{\dx}{\partial_x}
\newcommand{\dy}{\partial_y}
\newcommand{\dxi}{\partial_{\xi}}

\newcommand{\norm}[1]{\lVert#1\rVert}
\newcommand{\abs}[1]{\lvert#1\rvert}

\newcommand{\pd}{\partial}
\newcommand{\laplace}{\triangle}
\newcommand{\der}[2]{\frac{\partial #1}{\partial #2}}

\newcommand{\dt}{\pd_t}
 
\newcommand{\tld}[1]{\tilde{#1}}

\newcommand{\R}{\Real}

\newcommand{\eps}{\varepsilon}





\newcommand{\B}{\mathcal{B}}

  \DeclareMathOperator{\supp}{supp}
   
\newcounter{theorem_counter}
\setcounter{theorem_counter}{0}
%
\newtheorem{thm}{Theorem}
\newtheorem{lem}[thm]{Lemma}
\newtheorem{prop}[thm]{Proposition}

\newtheorem{coro}[thm]{Corollary}

\newtheorem{defn}[thm]{Definition}

\theoremstyle{remark}
\newtheorem{remark}[thm]{Remark}

\title{A sharp condition for the well-posedness of the linear KdV-type equation}
\author{Timur Akhunov}
\address{Department of Mathematics, University of Calgary,
2500 University Drive NW,  Calgary, Alberta, T2N 1N4, Canada}
\curraddr{}
\email{takhunov@ucalgary.ca}
\date{}
\subjclass[2010]{Primary: 35Q53}
\keywords{KdV, linear, dispersive, partial differential equations, energy method, Mizohata condition}

\begin{document}
\maketitle


\begin{abstract}
An initial value problem for a very general linear equation of KdV-type is considered. Assuming non-degeneracy of the third derivative coefficient this problem is shown to be well-posed under a certain simple condition, which is an adaptation of Mizohata-type condition from the {\Schr} equation to the context of KdV. When this condition is violated ill-posedness is shown by an explicit construction. These results justify formal heuristics associated with dispersive problems and have applications to non-linear problems of KdV-type.
\end{abstract}

\maketitle

\section{Introduction}
This paper is concerned with the study of the equation
\begin{equation}
  \label{eq}
  \begin{cases}
    \dt u + Lu =f \text{ for } (t,x) \in (0,T]\times \R\\
    u(0,x)= u_0(x)
  \end{cases}\\ ,\text{ where } L= \sum_{j=0}^3 a_j(t,x)\dx^j
\end{equation}
where $a_j$ are real-valued functions.\\
This is the most general linear form of the KdV, one of the most studied dispersive equations, and used as an important model in understanding behavior of linear and non-linear waves. Such an equation with non-constant dispersive coefficient $a_3$ describes nonisotropic dispersion and its study is of use for the quasi-linear analogues of \eqref{eq}.\\

Another motivation, for the study of the well-posedness of \eqref{eq} is understanding the relative strength of dispersive and non-dispersive effects present in the equation. In particular, from the geometrical optics expansion for the equation, c.f. the classical book of Whitham \cite{Whit}, the dispersive coefficient $a_3$ guides the propagation of the wave packets, while the term $a_2\dx^2$ can lead to the growth of the amplitudes of the wave packets of \eqref{eq}. In light of these heuristics, it is natural to expect that well-posedness requires \textit{non-degeneracy} of $a_3$, which prevents the collapse of the wave packets, namely $0<\eps\le \abs{a_3} \le \frac{1}{\eps}$ for some $\eps$, and a condition on $a_2$ to ensure dispersion dominates anti-diffusion effects. Craig-Goodman \cite{CrGdm90} proved well-posedness in the Sobolev spaces $H^s$ for $a_2\equiv a_1\equiv 0$ under the \textit{non-degeneracy} of coefficient $a_3$ and ill-posedness for some degenerate cases of $a_3$. In a follow up paper, Craig-Kappeler-Strauss \cite{CKS92} proved well-posedness with non-degenerate dispersion and $-a_2\ge 0$, as well as extensions to the quasi-linear analogues. These results were extended in \cite{Akh13} to allow for the "anti-diffusion" in $a_2$, as long as $\jap{x}^{\frac{1}{2}^+}\abs{a_2} \le C$, under some additional assumptions on other coefficients, and to systems of equations. \\

In the current paper, the condition on the diffusion coefficient $a_2$ is extended to a sharp one for the well-posedness in $H^s$, where well-posedness means \textit{existence} of $C^0_{[0,T]}H^s$ distributional solutions of \eqref{eq}, that are \textit{unique} and \textit{depend continuously on data} in the $C^0_{[0,T]}H^s$ topology. Namely a condition on the diffusion coefficient $a_2$ along the flow is obtained, that separates well-posedness from ill-posedness (in the sense of violating continuous dependence) of \eqref{eq} with non-degenerate dispersion. This is qualitatively similar to the necessity of a Mizohato condition $\abs{\sup_{x,t\abs{\omega}=1}\int_0^t\Im b(x+s\omega)\cdot \omega ds} <\infty$ for the well-posedness {\Schr} equation $\dt u + i\laplace u + b(x) \nabla u =0$ in \cite{Miz85}, see also \cite{CKSt95}, \cite{Doi94}, \cite{KPV2004} and references therein for more refined results on the variable coefficient {\Schr} equation.  The well-posedness is proved by the "gauged energy method" and the condition on the gauge captures the $a_2$ condition.  Ill-posedness is proved by an explicit geometrical optics construction.\\

The rest of the paper is organized as follows. In the section 2 the main results of the paper are stated. Well-posedness is proved in the section 3, and ill-posedness in section 4.\\

Some results of this paper were obtained during my Ph.D. studies at the University of Chicago, under the supervision of Carlos Kenig. I would like to thank Carlos Kenig and Cristian Rios for helpful discussions. Finally, I would like to thank the anonymous referee for helpful comments.
\section{Main results.}
 The following functional space notation is used. Let\\
  $\B_x^N = \{f(x)\in C^N(\R): \,\dx^j f \in L^\infty \text{ for all } 0\le i\le N\}$, $\B = \cap_n \B^N$, and\\ $H^s = \{ f\in \mathscr{S}': \norm{f}_{H^s} = \norm{\jap{\xi}^s \hat f(\xi)}_{L^2} <\infty\}$, where $\jap{x} = \sqrt{1+\abs{x}^2}$.\\
   For $1\le p<\infty$, define \[\norm{u}_{L^p_{[0,T]}X_x}:=(\int_0^T \norm{u(t)}^p_{X_x}dt) ^{\frac{1}{p}} \text{ and } X_T:=\norm{u}_{L^\infty_{[0,T]}X_x}:=\esssup_t \norm{u(t)}_{X_x}\]
for one of spaces $X$ above. \\

The following assumptions are made for the coefficients of \eqref{eq}
\begin{description}
  \item[(A1)] Dispersive coefficient $a_3(t,x)$ is non-degenerate. That is, there are constants $\Lambda\ge \lambda>0$, such that
  \begin{equation*}
  \lambda \le \abs{a_3(t,x)} \le \Lambda
\end{equation*}
uniformly for $(x,t) \in \R\times[0,T]$.
\item[(A2)] Regularity of the coefficients. For all $N\ge 0$.
\begin{itemize}
  \item $a_3 \in C^0_{[0,T]}\B_x^{N+3}\cap C^1_{[0,T]} \B^1_x$.
  \item $a_2 \in C^0_{[0,T]}\B_x^{N+2}\cap C^1_{[0,T]} \B^0_x$.
  \item $a_1 \in C^0_{[0,T]}\B_x^{N+1}$
  \item $a_0 \in C^0_{[0,T]}\B_x^N$.
\end{itemize}
\item[(A3)] Weak diffusion. $\int_0^x \frac{ \,a_2(y,t)}{\abs{a_3(y,t)}}d y \in C^1_{[0,T]}L^\infty_x$.
\end{description}
Note, that by $(A1)$ and $(A2)$, $a_3$ has a constant sign.\\

For $N\ge 0$ define
\begin{align*}
 \hspace{-20 pt} & C_N = \norm{a_3}_{L^\infty_T} + \norm{\frac{1}{a_3}}_{L^\infty_T} + \sum_{j=0}^3 \norm{a_j}_{\B^{N+i}_T} + \sum_{i=2}^3\norm{\dt a_j}_{L^\infty_T} +  \norm{\int_0^x \frac{a_2(y,t)}{\abs{a_3(y,t)}}d y}_{L^\infty_T}\\
 \hspace{-20 pt} & +\norm{\dt \int_0^x \frac{a_2(y,t)}{\abs{a_3(y,t)}}d y}_{L^\infty_T}
\end{align*}
For the well-posedness arguments, positive constants will depend on $C_N$ for some $N$ and will not be made explicit.
\begin{thm}
  \label{P} Suppose the coefficients of \eqref{eq} satisfy $(A1)$-$(A3)$. Then for all $s\in\R$, \eqref{eq} is well-posed in $H^s$. That is for any $(u_0,f) \in H^s \times L^1_{[0,T]}H^s$ there exists a unique $u\in C^0_{[0,T]}H^s$ satisfying \eqref{eq} in the sense of distributions. In addition, there exists $C=C(s)$
\begin{align}
   \label{Hs} \sup_{0\le t\le T}\norm{u(t)}_{H^s} \le C e^{CT}(\norm{u_0}_{H^s}+\int_0^T\norm{f(t)}_{H^s} dt)
\end{align}
Moreover, for any $\delta>\frac{1}{2}$, the solution additionally satisfies $u\in L^2_{[0,T]} H^{s+1}_{\jap{x}^{-2\delta} dx}$ and there is a $\tld C=\tld C(s,\delta)$
\begin{align}
  \label{Sm}    \norm{\jap{x}^{-\delta}\dx u}_{L^2_{[0,T]}H^s_x} \le \tld C(1+\sqrt{T})e^{\tld CT}(\norm{u_0}_{H^s} +\int_0^T\norm{f(t)}_{H^s}dt)
\end{align}
\end{thm}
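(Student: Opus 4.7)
The plan is to use a gauged energy method: first, a multiplicative gauge symmetrizes the leading spatial operator, after which the standard energy method combined with a weighted Kato-type estimate closes both \eqref{Hs} and \eqref{Sm}. I would begin by introducing
\[
\phi(t,x) \;=\; \tfrac{1}{3}\int_0^x \frac{a_2(t,y)}{a_3(t,y)}\,dy \;-\; \tfrac{1}{2}\log\abs{a_3(t,x)},
\]
and setting $v = e^\phi u$. By (A3) the first term lies in $C^1_{[0,T]}L^\infty_x$, and by (A1) the logarithmic correction and its time derivative also lie in $L^\infty_T$, so $e^{\pm\phi}$ is uniformly bounded and $\norm{u}_{H^s}\sim\norm{v}_{H^s}$. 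A direct computation gives $\dt v + L'v = e^\phi f$ with $L' = a_3\dx^3 + \tfrac{3}{2}(\dx a_3)\dx^2 + \tilde a_1\dx + \tilde a_0$; the key point is that the coefficient $\tfrac{3}{2}(\dx a_3)$ of $\dx^2$ is exactly the value for which the symmetric part $L'+(L')^*$ has vanishing second-order symbol (indeed $2\cdot\tfrac{3}{2}(\dx a_3) - 3(\dx a_3) = 0$). The gauge thereby absorbs simultaneously the Mizohata-type $a_2$ obstruction (through the integral piece) and the $\tfrac{3}{2}(\dx a_3)$ obstruction arising from the antisymmetry of $a_3\dx^3$ (through the logarithmic piece).

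Next I would run the $L^2$ energy identity for $v$: thanks to the cancellation, $\tfrac{d}{dt}\norm{v}_{L^2}^2 = -\form{(L'+(L')^*)v}{v} + 2\form{e^\phi f}{v}$ contains no $(\dx v)^2$ contribution, and the remaining terms are bounded by $C(\norm{v}_{L^2}^2 + \norm{e^\phi f}_{L^2}\norm{v}_{L^2})$ using (A2) and the control of $\dt\phi$; Gronwall then yields \eqref{Hs} at $s=0$. For the smoothing estimate \eqref{Sm}, I would pair the $v$-equation against $\alpha(x)v$ with $\alpha$ bounded and monotone, $\alpha'(x) = \jap{x}^{-2\delta}$; integration by parts, again relying on the gauge cancellation (the $\alpha\cdot a_3'$ pieces from the $\dx^3$ and $\dx^2$ terms exactly annihilate), leaves the principal second-order contribution as $-3\int\alpha' a_3(\dx v)^2\,dx \le -3\lambda\int\jap{x}^{-2\delta}(\dx v)^2\,dx$, which, after time integration and use of the $L^2$ bound just obtained, supplies the Kato control at $s=0$.

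For general $s \in \R$, I would apply $\jap{D}^s$ to the $v$-equation: the commutator $[\jap{D}^s, L']\jap{D}^{-s}$ is of order two with principal symbol giving rise to an effective contribution $s(\dx a_3)\dx^2 V$ on the new unknown $V := \jap{D}^s v$, shifting the $\dx^2$-coefficient to $(\tfrac{3}{2}+s)(\dx a_3)$ and disrupting the symmetric-part cancellation. A further scalar gauge $W = \abs{a_3}^{s/3} V$---bounded uniformly above and below by (A1)---restores the vanishing symmetric part, and the $L^2$-plus-smoothing argument applied to $W$ yields both \eqref{Hs} and \eqref{Sm} at level $s$. Existence would come from parabolic regularization $L\mapsto L + \eps\dx^4$; for each $\eps>0$ the regularized equation is classically solvable, and uniform bounds derived exactly as above (the added dissipation only helps) allow passage to $\eps\to 0$. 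Uniqueness and continuous dependence then follow from linearity and \eqref{Hs} applied to differences. The hard part will be the precise identification of the gauge $\phi$: the logarithmic correction is essential for the cancellation and not apparent directly from the statement, and bookkeeping the lower-order terms generated after both gauges requires care to ensure no hidden dependence on quantities outside (A1)--(A3).
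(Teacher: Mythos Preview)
Your approach is correct and coincides with the paper's: your multiplicative gauge $e^{\phi}$ is exactly (the inverse of) the paper's gauge with $c_\delta=0$, and your second correction $|a_3|^{s/3}$ is precisely what the paper's re-application of its $L^2$ proposition to the commuted operator $L+[J^s,L]J^{-s}$ produces once one observes that the extra $s\,\dx a_3$ in the new $a_2$ still satisfies (A3). Two minor implementation differences are worth noting: the paper obtains the local smoothing estimate by inserting the weight $\jap{x}^{-2\delta}$ directly into the gauge ODE (its $c_\delta=1$ case) rather than through a separate weighted pairing against $\alpha(x)v$, and the paper deduces existence from H\"ormander's duality argument (a~priori estimates for both $L$ and $L^*$) rather than from parabolic regularization; both alternatives are standard and lead to the same conclusions.
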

Estimate \eqref{Hs} implies continuous dependence for \eqref{eq}, while estimate \eqref{Sm} is a manifestation of a \textit{local smoothing effect} of \eqref{eq}.
\begin{remark}
  If in addition, $f\in C^0_{[0,T]}H^{s-3}$, then for $s>3\frac{1}{2}$ the unique solution from the Theorem \ref{P} is classical by the Sobolev embedding.
\end{remark}
\begin{remark}\label{-t}
If the coefficients of \eqref{eq}, in addition, satisfy $(A1)$ - $(A3)$ on $[-T,0]$, then the transformation of the equation by $t\to -t$ changes the sign of all $a_j$, while again preserving all of the assumptions. Therefore, Theorem \ref{P} extends to $[-T,0]$.\\
Moreover, the transformation $x\to -x$ in \eqref{eq} changes the sign of $a_j$ for odd $i$, but preserves the assumptions $(A1)$ - $(A3)$. Without of loss of generality $a_3 >0$ will be assumed.
\end{remark}
Ill-posedness result  complements the Theorem \ref{P} and is proved by a different method.
\begin{thm}\label{Neg}
  Suppose the coefficients of \eqref{eq} satisfy $(A1)$, $(A2)$ 
  and
  \begin{description}
    \item[(A3N)] $\sup_{x>0}\int_0^x \frac{ \,a_2(y,0)}{\abs{a_3(y,0)}}d y=\infty$
  \end{description}
  Then for all $T>0$ and $s\in \R$ \eqref{eq} is ill-posed in $C^0_{[0,T]}H^s$ forward in time. More precisely, there is no continuous function $C(t,t_0)$ for $0\le t_0\le t\le T$, such that
  \begin{align}\label{ill}
    \sup_{t_0\le t\le T}\norm{u(t)}_{H^s} \le C(t,t_0) \norm{u(t_0)}_{H^s}
  \end{align}
  whenever $u$ solves \eqref{eq} on $[0,T]$ with $f\equiv 0$. Equivalently \eqref{Hs} fails on any $[0,T]$.
\end{thm}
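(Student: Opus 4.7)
The plan is to argue by contradiction: suppose a continuous $C(t,t_0)$ satisfies \eqref{ill}, and construct a sequence of exact solutions whose ratios $\|u_n(T)\|_{H^s}/\|u_n(0)\|_{H^s}$ diverge. By Remark \ref{-t} we may assume $a_3>0$, and by $(A3N)$ combined with the continuity in $t$ provided by $(A2)$, I can select $x_n\to+\infty$ and a small interval $[0,T]$ on which
$$M_n(t):=\int_0^{x_n}\frac{a_2(y,t)}{a_3(y,t)}\,dy \longrightarrow +\infty\quad\text{uniformly in }t\in[0,T].$$
Disproving \eqref{ill} on such an interval suffices, since the same counterexamples show failure on any larger interval.

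The core construction is a WKB/geometric-optics wave packet
$$v_n(t,x)\;=\;A_n(t,x)\,e^{i\xi_n\phi_n(t,x)},\qquad \xi_n\to\infty,$$
with a real phase $\phi_n$ built from the bicharacteristic flow of the dispersive part of $L$: the group velocity at frequency $\xi_n$ is $-3a_3(t,x)\xi_n^2$, so I solve $X_n'(t)=-3a_3(t,X_n(t))\xi_n^2$ with $X_n(0)=x_n$, and tune $\xi_n^2\sim x_n/T$ so that $X_n(T)=O(1)$. The wave packet then traverses the entire interval $(0,x_n)$ while $t$ runs over $[0,T]$. The subprincipal part of $L$ contributes a coefficient $+a_2\xi_n^2$ to the transport equation for $A_n$ along this flow; integrating and converting $dt$ to $dy=-3a_3\xi_n^2\,dt$ yields an amplification
$$\frac{|A_n(T,X_n(T))|}{|A_n(0,x_n)|}\,\sim\,\exp\!\Bigl(\tfrac{1}{3}\!\int_{X_n(T)}^{x_n}\!\frac{a_2(t,y)}{a_3(t,y)}\,dy\Bigr)\,\geq\,e^{M_n/3-O(1)}\longrightarrow\infty.$$
With $A_n(0,\cdot)$ a smooth bump of unit support near $x_n$, normalizing $\|v_n(0)\|_{H^s}=1$ makes $\|v_n(T)\|_{H^s}/\|v_n(0)\|_{H^s}$ equal to the amplification factor, since both endpoints are coherent wave packets concentrated at the common frequency $\xi_n$.

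To pass from the approximate $v_n$ to an actual solution I bound the WKB residual $e_n:=\dt v_n+Lv_n$. After $\phi_n$ is fixed by the eikonal equation and $A_n$ is expanded in inverse powers of $\xi_n$ to sufficiently many orders (using the regularity of $(A2)$), $\|e_n\|_{L^1_{[0,T]}H^s}\to0$, even after taking the growth factor into account. The exact solution $u_n$ of the homogeneous \eqref{eq} with $u_n(0)=v_n(0)$ then differs from $v_n$ by $w_n:=v_n-u_n$, which solves \eqref{eq} with source $e_n$ and zero initial data; Duhamel combined with the hypothesized \eqref{ill} yields $\|w_n(T)\|_{H^s}\le\int_0^T C(T,s)\|e_n(s)\|_{H^s}\,ds\to 0$. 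Hence $\|u_n(T)\|_{H^s}\gtrsim e^{M_n/3}\to\infty$ while $\|u_n(0)\|_{H^s}=1$, contradicting \eqref{ill}.

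The main obstacle is simultaneously forcing the WKB residual $\|e_n\|_{L^1_t H^s}$ to be small while $\xi_n$ and the amplification both diverge: this demands running the amplitude expansion to several orders, building the phase precisely from the bicharacteristic flow, and carefully tracking how the constants in the error bound depend on $\xi_n$. The remaining pieces --- the Duhamel comparison, the $H^s$ normalization of a wave packet of frequency $\xi_n$, and the construction of $u_n$ from sufficiently regular initial data --- are routine.
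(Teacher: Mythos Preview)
Your WKB wave-packet picture is the right heuristic, but the place where you wave your hands --- ``$\|e_n\|_{L^1_{[0,T]}H^s}\to 0$, even after taking the growth factor into account'' --- is exactly where the argument breaks. By fixing $T$ and forcing the packet to travel from $x_n$ to $O(1)$ in that time, you have pinned $\xi_n^2\sim x_n/T$; in particular $\xi_n$ is only polynomial in $x_n$. But nothing in $(A3N)$ prevents the amplification $e^{M_n/3}$ from being \emph{exponential} in $x_n$: take $a_3\equiv 1$, $a_2\equiv 1$, so $M_n=x_n$ and $e^{M_n/3}=e^{\xi_n^2 T/3}$. After a $k$-term amplitude expansion the residual is at best $O(\xi_n^{1-k})$ times quantities that carry the same exponential growth along the bicharacteristic; integrating over $[0,T]$ you get something of size $\xi_n^{-1-k}e^{\xi_n^2T/3}$, which diverges for every fixed $k$. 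So the step you label ``routine'' is in fact impossible in general, and the contradiction never closes.

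The paper avoids this by \emph{not} fixing $T$. For each $n$ it first chooses a spatial interval $[x_0-N,x_0]$ on which the amplification exceeds $16n$, and only then picks $\xi$ as large as it likes (depending on $x_0,N$) and sets $t_n=N/(3\xi^2)\to 0$. With a one-term ansatz $v=e^{i(x\xi+t\xi^3)}w$ (after a change of variable making $a_3\equiv 1$), the residual is $O(\xi)$ in $L^2_x$, hence $O(N/\xi)$ in $L^1_{[0,t_n]}L^2_x$; no higher-order expansion is needed. This directly falsifies the inhomogeneous estimate \eqref{Hs} on $[0,t_n]$ for the explicit $v_n$, and continuity of $C(t,t_0)$ at the diagonal then rules out \eqref{ill}. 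A secondary advantage of this route is that it never constructs an exact homogeneous solution $u_n$: your Duhamel step silently assumes the solution operator $S(t,t_0)$ exists, which is not part of the hypothesis \eqref{ill}.
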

 \begin{remark}\label{a3+}
   The transformation $x\to -x$ shows that $(A3N)$ is equivalent to
   \[\sup_{x<0}\int_x^0 \frac{ \,a_2(y,0)}{\abs{a_3(y,0)}}d y=\infty.\]
   However, the equivalence breaks down if absolute values are removed from $a_3$ in $(A3)$. Thus $a_3>0$ can be assumed without loss of generality, as long as $(A3N)$ is replaced with
 \begin{description}
     \item[(A3N')] $a_3>0$. Furthermore,  \begin{align*}
       \sup_{x>0}\int^x_0 \frac{ \,a_2(y,0)}{a_3(y,0)}d y=\infty \text{ or } \sup_{x<0}\int_x^0 \frac{ \,a_2(y,0)}{a_3(y,0)}d y=\infty
     \end{align*}
   \end{description}
   \end{remark}
\begin{remark}
  By reversing the time $t\to -t$ as in the Remark \ref{-t}, Theorem \ref{Neg} shows that
  \begin{align*}
    \sup_{x>0}\int_0^x \frac{ \,a_2(y,0)}{\abs{a_3(y,0)}}d y=-\infty
  \end{align*}
  leads to ill-posedness on $[-T,0]$. Thus the condition $\int_0^x \frac{ \,a_2(y,0)}{\abs{a_3(y,0)}}d y\in L^\infty$ is crucial for the well-posedness and the condition $(A3)$ for the Theorem \ref{P} is sharp for well-posedness on $[-T,T]$.
\end{remark}
While preparing this paper for publication, I have learned of a preprint by Ambrose-Wright \cite{AmWr12} that treats an analogue of \eqref{eq} in the periodic case. Their argument for the well-posedness is also based on the "gauged energy method", however in the case of $\R$ the smoothness of the coefficients does not imply integrability that is often needed. Additionally, this paper also proves that \eqref{eq} possesses a local smoothing effect, which is not present in the periodic case. The ill-posedness result in \cite{AmWr12} is done by a spectral method, which only works in the time independent case of \eqref{eq}.\\

\section{Well-posedness}
The main ingredient in the proof of the Theorem \ref{P} is stated as the following Proposition, which is an \textit{a priori} $L^2$ estimate for a slightly more general version of \eqref{eq}, that comes from commuting derivatives.
\begin{equation}
  \label{eq0}
  \begin{cases}
    \dt u + L_{A}u =f \text{ for } (t,x) \in (0,T]\times\R\\
    u(0,x)= u_0(x)
  \end{cases}\\ ,\text{ where } L_A= L + A_0(t,x,\dx)
\end{equation}
with $L$ from \eqref{eq}. The following assumptions are made on $A_0 \in C^0_{[0,T]}S^0$, the Pseudo-Differential operator of standard symbol class of order $0$ (Cf. Chapter VI of \cite{Stein93}):
\begin{description}
  \item[(A4)] The $S^0$ semi-norms of $A_0$ are bounded for $t\in[0,T]$ and their size depends on constants $C_N$ from $(A1)$--$(A3)$.
\end{description}
\begin{prop}\label{L2}
Suppose that the coefficients $a_j$ of \eqref{eq} satisfy $(A1)$--$(A3)$ and $A_0$ satisfies $(A4)$. Then there exists a constant $C$ and for any $\delta>\frac{1}{2}$ there is a constant $\tld C$, such that for any $u\in C^1_{[0,T]}L^2\cap C^0_{[0,T]}H^{3}$, the triple $(u,u_0,f)$ with $u_0$ and $f$ defined by \eqref{eq0} satisfies
\begin{align}
\label{L2e} \sup_{0\le t\le T}\norm{u(t)}_{L^2} \le C e^{CT}(\norm{u_0}_{L^2}+\int_0^T\norm{f(t)}_{L^2} dt)\\
  \label{L2g}    \norm{\jap{x}^{-\delta}\dx u}_{L^2_{[0,T]\times x}} \le \tld C(1+\sqrt{T})e^{\tld CT}(\norm{u_0}_{L^2} +\int_0^T\norm{f(t)}_{L^2}dt)
\end{align}
\end{prop}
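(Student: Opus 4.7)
The plan is to use a gauged energy method: conjugate the equation by a carefully chosen multiplier so that both \eqref{L2e} and \eqref{L2g} become transparent. Since by Remark~\ref{a3+} we may assume $a_3>0$, define
\[
    c(t,x) := a_3(t,x)^{1/2}\exp\!\left(-\tfrac{1}{3}\int_0^x\tfrac{a_2(t,y)}{a_3(t,y)}\,dy\right),
\]
which by (A1)--(A3) satisfies $0<c_0\le c\le c_0^{-1}$ with $c^{\pm 1}$ smooth in $x$ and $C^1$ in $t$. Setting $v:=c^{-1}u$ and substituting into \eqref{eq0} gives $\dt v+\tilde L v=\tilde f$ with $\tilde f:=c^{-1}f$ and
\[
    \tilde L = a_3\dx^3 + \tilde a_2\dx^2 + \tilde a_1\dx + \tilde a_0 + \tilde A_0,\qquad \tilde a_2 := 3 a_3\,c^{-1}\dx c + a_2 = \tfrac{3}{2}\dx a_3,
\]
the last equality by the designed choice $c^{-1}\dx c = \tfrac{1}{2}(\dx a_3)/a_3 - \tfrac{1}{3}a_2/a_3$. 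Here $\tilde a_1,\tilde a_0$ inherit the regularity of the original coefficients and of $\phi:=\tfrac{1}{3}\int_0^x a_2/a_3\,dy$, while $\tilde A_0:=c^{-1}A_0 c+c^{-1}\dt c$ remains an $L^2$-bounded $S^0$ PDO by the standard pseudodifferential calculus. The algebraic payoff of this $c$ is a pair of cancellations in $\tfrac{1}{2}(\tilde L+\tilde L^*)$: the $\dx^2$-coefficient $\tilde a_2-\tfrac{3}{2}\dx a_3$ and the $\dx$-coefficient $\dx\tilde a_2-\tfrac{3}{2}\dx^2 a_3$ both vanish identically, so $\tfrac{1}{2}(\tilde L+\tilde L^*)$ is of order zero and $L^2$-bounded.

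With these cancellations \eqref{L2e} is essentially immediate: pairing the equation for $v$ against $v$ gives $\tfrac{d}{dt}\norm{v}_{L^2}^2 = 2\Re\form{v}{\tilde f}-2\form{v}{\tfrac{1}{2}(\tilde L+\tilde L^*)v}\le C\norm{v}_{L^2}^2+2\norm{v}_{L^2}\norm{\tilde f}_{L^2}$, so Gronwall's inequality together with $\norm{u}_{L^2}\sim\norm{v}_{L^2}$ delivers \eqref{L2e}. For the smoothing \eqref{L2g}, fix $\delta>\tfrac{1}{2}$ and use the multiplier
\[
    p(t,x) := K + \int_{-\infty}^x\tfrac{\jap{y}^{-2\delta}}{a_3(t,y)}\,dy,
\]
which is bounded because $\delta>\tfrac{1}{2}$ makes the integral converge, and positive for large $K$, with $\dx p = \jap{x}^{-2\delta}/a_3$. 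Pairing $\dt v+\tilde L v=\tilde f$ against $pv$ and integrating by parts in the cubic and quadratic pieces yields
\[
    \tfrac{d}{dt}\form{pv}{v} + 3\int\bigl[\dx(p a_3) - p\,\dx a_3\bigr]\abs{\dx v}^2\,dx \le C\norm{v}_{L^2}^2 + C\norm{v}_{L^2}\norm{\tilde f}_{L^2},
\]
and by the same gauge-induced identity the bracket collapses to $(\dx p)a_3=\jap{x}^{-2\delta}$. Integrating in time and using \eqref{L2e} to bound $\norm{v}_{L^2}$ gives \eqref{L2g} for $v$; expanding $\dx u = c\,\dx v+(\dx c)v$ transfers the estimate back to $u$.

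The main obstacle is not the energy bookkeeping but the design of $c$: it must simultaneously carry the Mizohata-type factor $\exp(-\tfrac{1}{3}\int a_2/a_3)$, which is finite exactly by (A3), and the $a_3^{1/2}$ factor that neutralizes the order-$2$ asymmetry of $a_3\dx^3$. Dropping either piece destroys one of the two cancellations---without $a_3^{1/2}$ the $L^2$-symmetric part of $\tilde L$ retains an uncontrolled $(\dx a_3)\dx^2$ term, and without the Mizohata factor the smoothing coefficient picks up $p\,a_2\abs{\dx v}^2$ with no good sign. Once $c$ is fixed, the remainder of the proof is routine bookkeeping of lower-order errors, justified by the regularity in (A2)--(A3), together with $L^2$-boundedness of $\tilde A_0$ via the standard Calder\'on--Vaillancourt theorem.
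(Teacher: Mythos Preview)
Your argument is correct and, for the basic $L^2$ bound \eqref{L2e}, coincides with the paper's: your gauge $c$ is exactly the paper's $\phi$ with $c_\delta=0$ (up to the harmless factor $a_3(t,0)^{-1/2}$), and the energy computation is the same as Lemma~\ref{enrg}. The one genuine difference is in how smoothing is extracted. The paper builds the weight into a \emph{second} gauge (the $c_\delta=1$ case of Lemma~\ref{gauCh}), so that the top-order term in the energy identity becomes $-\int\jap{x}^{-2\delta}\abs{\dx v}^2$ directly; you instead keep the single gauge $c$ and run a weighted energy estimate with the multiplier $p$, obtaining the same coercive term from $3\bigl[\dx(pa_3)-p\,\dx a_3\bigr]=3(\dx p)a_3=3\jap{x}^{-2\delta}$. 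These are equivalent mechanisms---your $p$ plays the role of the ratio $(\phi_{c_\delta=1}/\phi_{c_\delta=0})^{-2}$ at the level of the quadratic form---but your separation of roles (gauge for the Mizohata cancellation, multiplier for smoothing) is arguably cleaner and avoids redoing the gauge verification for each $\delta$, at the cost of one extra round of integration by parts to reduce the cross terms like $2\Re(\dx^2(pa_3)v,\dx v)$ to order zero.
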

\begin{remark}
  \label{rem:L2} If $A_0\equiv 0$, then $N=0$ in $(A2)$ can be chosen for the Proposition \ref{L2}.
\end{remark}
The proof of the Proposition \ref{L2} is done by a change of variables (gauge) followed by the application of the energy estimates. The proof is broken into several preliminary results.\\

 A gauge is a smooth invertible function, which for the purposes of the argument needs to have $3$ bounded derivatives:
\begin{defn}\label{gau}
  A function $\phi\in C^0_{[0,T]}\B^3_x\cap C^1_{[0,T]}\B^0$ is called a \textbf{gauge}, if
  \begin{itemize}
    \item $\phi(x,t)>0$ with $\frac{1}{\phi}\in L^\infty_{[0,T]\times\R}$.
    \item  $\norm{\frac{1}{\phi}}_{L^\infty_{[0,T]\times\R}}+\norm{\phi}_{\B^3_T}+ \norm{\dt \phi}_{L^\infty_T} \le C(C_0,\delta)$ with $C_N$ from $(A1)$--$(A3)$.
      \end{itemize}
\end{defn}
Suppose that $\phi(x,t)$ is a \textit{gauge}. Define
\begin{align*}
 v= \phi^{-1} u
\end{align*}
Definition \ref{gau} implies that $v \in C^1_{[0,T]}L^2\cap C^0_{[0,T]}H^{3}$ if and only if $u \in C^1_{[0,T]}L^2\cap C^0_{[0,T]}H^{3}$ and substitution of $v$ into \eqref{eq0} gives:
\begin{align}\label{vt}
  \begin{cases}
    \dt v + L_\phi v = \phi^{-1}f \\
    v(x,0)= \phi^{-1}u_0
  \end{cases}
\end{align}
where
\begin{align*}
L_\phi = a_3\dx^3 +\left(a_2 + \phi^{-1}{3 a_3\dx \phi}\right)\dx^2  +\left(a_1+ \phi^{-1}({2 a_2\dx \phi+ 3 a_3\dx^2\phi})\right)\dx\\
  + \left(a_0 +{\phi^{-1}}({\dt \phi+a_1\dx \phi+a_2\dx^2\phi+a_3\dx^3 \phi})\right)I + \phi^{-1}A_0(\phi  \underline{\,\,\,})
\end{align*}

\begin{lem}\label{v}
From the definition of the gauge,
 \begin{align}\label{phi}
   \norm{u}_{L^2} \approx \norm{v}_{L^2} \text{ and } \sum_{j=0}^1\norm{\jap{x}^{-\delta}\dx^i u}_{L^2_{[0,T]\times x}} \approx \sum_{j=0}^1\norm{\jap{x}^{-\delta}\dx^j v}_{L^2_{[0,T]\times x}}
 \end{align}
 with comparability constants dependent only on the constant in the Definition \ref{gau}. Therefore, to prove Proposition \ref{L2} it suffices to prove \eqref{L2e} and \eqref{L2g} for $v$ satisfying \eqref{vt}.
\end{lem}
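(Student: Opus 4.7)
The plan is to derive both equivalences directly from the pointwise bounds on $\phi$, $\phi^{-1}$, and $\dx\phi$ guaranteed by Definition \ref{gau}, and then to package the reduction as an application of these bounds to the data of \eqref{vt}.

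For the $L^2$ equivalence in \eqref{phi}, I would use the pointwise identity $\abs{u(t,x)} = \phi(t,x)\,\abs{v(t,x)}$ together with the two-sided bound $\norm{\phi^{-1}}_{L^\infty}^{-1} \le \phi(t,x) \le \norm{\phi}_{L^\infty}$. Squaring and integrating in $x$ yields $\norm{u(t)}_{L^2} \approx \norm{v(t)}_{L^2}$ with comparability constants depending only on the gauge constant $C(C_0,\delta)$. The same pointwise bound, multiplied by $\jap{x}^{-\delta}$ and integrated in $(t,x)$, also gives $\norm{\jap{x}^{-\delta} u}_{L^2_{[0,T]\times x}} \approx \norm{\jap{x}^{-\delta} v}_{L^2_{[0,T]\times x}}$, handling the $j=0$ term in \eqref{phi}.

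For the $j=1$ term, I would apply the product rule in both directions: $\dx u = \phi\,\dx v + (\dx\phi) v$ and, symmetrically, $\dx v = \phi^{-1}\dx u + \dx(\phi^{-1})\,u$, where $\dx(\phi^{-1}) = -\phi^{-2}\dx\phi$ lies in $L^\infty_{[0,T]\times\R}$ because $\phi^{-1}\in L^\infty$ and $\dx\phi\in L^\infty$ by the gauge definition. The triangle inequality in $L^2_{[0,T]\times x}$ gives
\[
\norm{\jap{x}^{-\delta}\dx u}_{L^2_{[0,T]\times x}} \le \norm{\phi}_{L^\infty}\norm{\jap{x}^{-\delta}\dx v}_{L^2_{[0,T]\times x}} + \norm{\dx\phi}_{L^\infty}\norm{\jap{x}^{-\delta} v}_{L^2_{[0,T]\times x}},
\]
and the reverse inequality after swapping roles. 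Adding the $j=0$ comparison from the previous step closes the chain and establishes the sum equivalence in \eqref{phi}.

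For the reduction statement, I would observe that \eqref{vt} is precisely \eqref{eq0} with data $(\phi^{-1}u_0, \phi^{-1}f)$ in place of $(u_0,f)$, and that $\norm{\phi^{-1}u_0}_{L^2} \le \norm{\phi^{-1}}_{L^\infty}\norm{u_0}_{L^2}$ and $\norm{\phi^{-1}f(t)}_{L^2} \le \norm{\phi^{-1}}_{L^\infty}\norm{f(t)}_{L^2}$ pointwise in $t$. Thus, if the estimates \eqref{L2e}–\eqref{L2g} are established for $v$ with right-hand sides involving $\phi^{-1}u_0$ and $\phi^{-1}f$, the equivalences in \eqref{phi} transport the left-hand sides back to $u$, and the $L^\infty$ bound on $\phi^{-1}$ transports the right-hand sides back to $u_0$ and $f$, with all constants absorbed into $C$ and $\tld C$. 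There is no real obstacle here; the only point requiring care is verifying that $\dx(\phi^{-1})\in L^\infty$, which is immediate from the combination of the gauge bounds on $\phi^{-1}$ and $\dx\phi$.
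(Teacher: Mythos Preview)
Your argument is correct and follows the same route as the paper: one-sided pointwise bounds on $\phi$ and $\phi^{-1}$ for the $L^2$ comparability, the product rule plus the $L^\infty$ bounds on $\phi$, $\phi^{-1}$, $\dx\phi$ for the weighted sum, and the symmetry $u\leftrightarrow v$ to get both directions. The only step you leave implicit, which the paper spells out, is that in transporting \eqref{L2g} back to $u$ the sum equivalence produces the extra $j=0$ term $\norm{\jap{x}^{-\delta}v}_{L^2_{[0,T]\times x}}$, and this is absorbed via $\norm{\jap{x}^{-\delta}v}_{L^2_{[0,T]\times x}}\le \sqrt{T}\,\sup_t\norm{v(t)}_{L^2}$ together with \eqref{L2e} for $v$; this is precisely the origin of the $(1+\sqrt{T})$ prefactor.
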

\begin{proof}
 It suffices to show one sided inequalities in \eqref{phi} as $\phi^{-1}$ satisfies the same estimates as $\phi$. The first comparability follows from $\norm{u}_{L^2}\le \norm{\phi}_{L^\infty}\norm{v}_{L^2}$. For the second, a similar computation and Cauchy-Schwartz implies
 \begin{align*}
  (\sum_{j=0}^1\norm{\jap{x}^{-\delta}\dx^i u}_{L^2_{[0,T]\times x}})^2 \le 2( \norm{\phi}_{L^\infty}^2+ \norm{\dx\phi}_{L^\infty}^2) (\sum_{j=0}^1\norm{\jap{x}^{-\delta}\dx^i v}_{L^2_{[0,T]\times x}})^2
 \end{align*}
 It is clear from \eqref{phi} that \eqref{L2e} is equivalent for $u$ and $v$, whereas an estimate
 \begin{align*}
   \norm{\jap{x}^{-\delta}\dx u}_{L^2_{[0,T]\times x}}\le C\sum_{j=0}^1\norm{\jap{x}^{-\delta}\dx^i v}_{L^2_{[0,T]\times x}}\le C(\sqrt{T}\norm{v}_{L^2_T}+\norm{\jap{x}^{-\delta}\dx v}_{L^2_{[0,T]\times x}})
 \end{align*}
 implies \eqref{L2g} for $u$, if \eqref{L2e} and \eqref{L2g} hold for $v$.
\end{proof}


The energy method involves multiplying \eqref{vt} by $v$ to estimate $\dt \norm{v}_{L^2}^2$ by $\norm{v}_{L^2}^2$:
\begin{align*}
  \dt \int \abs{v}^2 = - 2 Re( L_\phi v,v) + (f,\phi v)
\end{align*}
The following Lemma summarizes the energy estimates for $L$ or $L_\phi$:
\begin{lem}\label{enrg}
  Consider an operator $L=a_3\dx^3 + a_2\dx^2 + a_1\dx +a_0$, where $a_3$--$a_0$ satisfy $(A2)$. Then for $v\in C^0_{[0,T]}H^{3}$
  \begin{align*}
    Re (Lv,v) = (\left[-a_2+\frac{3}{2} \dx a_3\right]\dx v, \dx v) + (b_0 v,v)
  \end{align*}
 for $b_0=a_0-\frac{1}{2}(\dx a_1-\dx^2 a_2 +\dx^3 a_3)$, where $(u,v)$ is an $L^2_x$ pairing.
\end{lem}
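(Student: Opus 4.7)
The plan is a direct integration-by-parts calculation, term by term, exploiting that all $a_j$ are real-valued and $v\in H^3$ (so all boundary terms vanish). I would write $\text{Re}(Lv,v)=\sum_{j=0}^{3}\text{Re}(a_j\dx^j v,v)$ and simplify using $2\text{Re}(f,g)=(f,g)+(g,f)$ together with the identities $\dx|v|^2=\dx v\,\bar v+v\,\dx\bar v$ and $\dx^3|v|^2=\dx^3 v\,\bar v+3\,\dx^2 v\,\dx\bar v+3\,\dx v\,\dx^2\bar v+v\,\dx^3\bar v$.

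Concretely, the $a_0$ term is just $\int a_0|v|^2\,dx$. For the $a_1$ term, rewriting $2\text{Re}(a_1\dx v,v)=\int a_1\,\dx|v|^2\,dx$ and integrating by parts once yields $-\int\dx a_1\,|v|^2\,dx$ after dividing by $2$. For the $a_2$ term, a single integration by parts gives $-\int a_2|\dx v|^2\,dx-\int\dx a_2\,\dx v\,\bar v\,dx$; taking the real part of the latter reduces it to $-\tfrac12\int\dx a_2\,\dx|v|^2\,dx=\tfrac12\int\dx^2 a_2\,|v|^2\,dx$. The principal calculation is the $a_3$ term: using the Leibniz expansion above,
\begin{equation*}
2\text{Re}(a_3\dx^3 v,v)=\int a_3\bigl[\dx^3|v|^2-3\,\dx(|\dx v|^2)\bigr]\,dx
=-\int\dx^3 a_3\,|v|^2\,dx+3\int\dx a_3\,|\dx v|^2\,dx,
\end{equation*}
so $\text{Re}(a_3\dx^3 v,v)=\tfrac32(\dx a_3\,\dx v,\dx v)-\tfrac12(\dx^3 a_3\,v,v)$. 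Summing the four contributions gathers the $|\dx v|^2$ pieces into $\bigl[-a_2+\tfrac32\dx a_3\bigr]|\dx v|^2$ and the $|v|^2$ pieces into $\bigl[a_0-\tfrac12(\dx a_1-\dx^2 a_2+\dx^3 a_3)\bigr]|v|^2$, which is precisely $b_0$.

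The only thing to be careful about is the sign bookkeeping in the $\dx^3$ computation and the justification that all boundary contributions at $\pm\infty$ vanish; the latter is immediate from $v\in H^3$ and $a_j\in\B_x^N$ via $(A2)$, which ensures each integrand and its derivatives up to the order needed lie in $L^1_x$. No single step is substantive enough to constitute a real obstacle; the lemma is a bookkeeping identity that isolates the "good" dispersive quadratic form $\bigl(-a_2+\tfrac32\dx a_3\bigr)|\dx v|^2$ controlling the energy, and this is exactly what the gauge in Lemma~\ref{v} will later be chosen to make non-negative.
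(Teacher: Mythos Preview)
Your proof is correct and follows essentially the same integration-by-parts approach as the paper; the only cosmetic difference is that for the $a_3$ term you package the computation via the Leibniz identity $\dx^3|v|^2-3\,\dx|\dx v|^2=2\,\text{Re}(\dx^3 v\,\bar v)$, whereas the paper records the intermediate identities $\text{Re}(a_3\dx^2 v,\dx v)=-\tfrac12(\dx a_3\,\dx v,\dx v)$ and $\text{Re}(\dx^2 a_3\,\dx v,v)=-\tfrac12(\dx^3 a_3\,v,v)$ separately. (Your closing aside should point to Lemma~\ref{gauCh} rather than Lemma~\ref{v}, since it is the former that actually selects the gauge.)
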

\begin{proof}[Proof of Lemma \ref{enrg}]
The computation is immediate by computing the adjoint $L^*$ of $ L$ using the Calculus of PDO. Alternatively, as $ L$ is a differential operator, the same computation can be also done by a repeated integration by parts. Indeed, the operator $\dx^{k}$ is skew-adjoint for odd $k$, which implies that principal parts of odd order terms are eliminated by integration by parts. For example
  \begin{align*}
    (a_1 \dx v, v) = - (v , a_1\,\dx v) -(\dx a_1 \,v,v) = -\overline{(a_1\, \dx v, v)} -(\dx a_1\, v,v)
  \end{align*}
An identical computation shows
\begin{align*}
Re (a_3\dx^2 v, \dx v) = - \frac{1}{2}(\dx a_3 \dx v, \dx v)\text{ and }Re (\dx^2 a_3 \dx v, v) = -\frac{1}{2} (\dx^3\, a_3 v,v)
\end{align*}
 Using these identities and more integration by parts establishes
  \begin{align*}
    Re(a_3\dx^3 v,v) = \frac{3}{2}(\dx a_3 \,\dx v,\dx v)  -\frac{1}{2} (\dx^3\, a_3 v,v)
  \end{align*}
A similar analysis for $Re (a_2\,\dx^2 v,v)$ completes the proof.
\end{proof}
Applying Lemma \ref{enrg} to $L_\phi$, shows that the only term of order higher than $0$ is\\ $  \left([2 a_2 + \frac{6 a_3\dx \phi}{\phi}- 3 \dx a_3]\dx v, \dx v\right)$. Thus, if this term were negative, an a priori estimate would be obtained for $v$. This motivates the choice of a gauge $\phi$ that should satisfy
\begin{align*}
  2 a_2 + \phi^{-1}6 a_3\dx \phi- 3 \dx a_3\le 0
\end{align*}
A choice of equality in this equation can be made and this choice is enough for the estimate \eqref{L2e}, but by exploiting the inequality the local smoothing estimate \eqref{L2g} is proved. The exact choice of a gauge is summarized in the following Lemma
\begin{lem}
  \label{gauCh}
  For $\delta >\frac{1}{2}$, let $\phi(x,t)$ be a solution of the ODE
  \begin{align*}
    \begin{cases}
      6 a_3 \dx \phi = \left( 3 \dx a_3 -c_{\delta}\jap{x}^{-2\delta} - 2a_2\right) \phi\\
      \phi(t,0) = 1
    \end{cases}
  \end{align*}
  where $c_\delta=0$ or $1$. Then $\phi$ is a gauge in the sense of the Definition \ref{gau}, and is  independent of $\delta$ if $c_\delta=0$.
\end{lem}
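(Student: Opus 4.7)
The plan is to solve the linear ODE in $x$ explicitly and then read off all the needed bounds from assumptions (A1)--(A3). Since $a_3>0$ (invoking Remark \ref{a3+}), division by $6a_3$ is safe and the ODE
\[
\dx \phi = \frac{3\dx a_3 - c_\delta \jap{x}^{-2\delta}- 2 a_2}{6 a_3}\,\phi,\qquad \phi(0,t)=1,
\]
integrates to
\[
\phi(x,t) \;=\; \left(\frac{a_3(x,t)}{a_3(0,t)}\right)^{1/2}\exp\!\left(-\int_0^x \frac{2 a_2(y,t)+c_\delta \jap{y}^{-2\delta}}{6\, a_3(y,t)}\,dy\right),
\]
after recognising that $\int_0^x \frac{\dx a_3}{a_3}dy = \log(a_3(x,t)/a_3(0,t))$. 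Note $\phi$ is manifestly independent of $\delta$ when $c_\delta=0$.

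Next I would verify the items in Definition \ref{gau}. Positivity of $\phi$ is immediate from the exponential representation and (A1). For the two-sided $L^\infty$ bound on $\phi$ and $\phi^{-1}$, the prefactor is controlled by $\sqrt{\Lambda/\lambda}$ from (A1), while the exponent is bounded because $\jap{y}^{-2\delta}\in L^1(\R)$ for $\delta>\tfrac12$, and because assumption (A3) gives
\[
\int_0^x \frac{a_2(y,t)}{a_3(y,t)}\,dy \in L^\infty_{[0,T]\times\R}.
\]
(Recall $a_3>0$, so $|a_3|=a_3$ in (A3).)

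For the spatial regularity $\phi\in C^0_{[0,T]}\B^3_x$, I would differentiate the ODE repeatedly: $\dx\phi$ is the product of $\phi$ and a $\B^2_x$ function (since $\dx a_3, a_2, a_3^{-1}\in \B^2_x$ by (A2) and (A1)), and so the Leibniz rule inductively shows $\dx^j\phi \in L^\infty_{[0,T]\times\R}$ for $j\le 3$, with bounds depending only on the constants $C_N$ for $N\le 3$. For the time regularity $\dt\phi\in L^\infty$, differentiating the closed-form expression under the integral gives a sum of two contributions: $\dt a_3$, which is in $L^\infty$ by (A2); and $\dt \int_0^x a_2/a_3\,dy$, which is in $L^\infty$ precisely by the second clause of (A3). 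All resulting bounds are controlled by $C_0$, as required by Definition \ref{gau}.

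The only subtle point is the time-regularity step: one must not differentiate $a_2/a_3$ pointwise under the integral and hope it is integrable in $y$; instead, (A3) is phrased exactly so that the time derivative of the $x$-antiderivative is globally bounded, which is what the gauge definition requires. Everything else is routine from the explicit formula.
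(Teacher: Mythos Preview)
Your proof is correct and follows essentially the same approach as the paper: solve the linear ODE explicitly to obtain the closed-form expression for $\phi$, then verify the gauge properties by invoking (A1) for the $a_3$ prefactor, integrability of $\jap{y}^{-2\delta}$ for $\delta>\tfrac12$, (A3) for the $a_2/a_3$ integral, and (A2) for the derivative bounds. One small omission: when differentiating in $t$ you should also account for the time dependence of the factor $\exp\bigl(-\int_0^x \frac{c_\delta}{6a_3(y,t)\jap{y}^{2\delta}}\,dy\bigr)$, but this is harmless since $\dt a_3\in L^\infty$, $a_3^{-1}\in L^\infty$, and $\jap{y}^{-2\delta}\in L^1$.
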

\begin{proof}
  The ODE for $\phi$ is solved explicitly as
  \begin{align*}
    \phi(x,t) = \sqrt{\frac{a_3(x,t)}{a_3(t,0)}}e^{-\int_0^x \frac{a_2(y,t)}{3a_3(y,t)}dy}e^{-\int_0^x\frac{c_\delta dy}{6a_3(y,t)\jap{y}^{2\delta}}}
  \end{align*}
  By $(A3)$ $e^{-\int_0^x \frac{a_2(y,t)}{3a_3(y,t)}dy} \approx 1$. $(A1)$ implies $\sqrt{\frac{a_3(x,t)}{a_3(t,0)}}\approx 1$.\\Finally, as $\delta >\frac{1}{2}$,
\begin{align*}
e^{-\int_0^x\frac{c_\delta dy}{6a_3(y,t)\jap{y}^{2\delta}}} =\begin{cases}
  1, & \text{ if } c_\delta =0\\
  C(\delta), & \text{ if } c_\delta =1
\end{cases}
\end{align*}
A computation for $\dt \phi$ and $\dx^j \phi$ for $j=1$, $2$ and $3$ and using $(A1)$--$(A3)$ finishes the proof.
\end{proof}
 \begin{proof}[Proof of Proposition \ref{L2}]
 By the Lemma \ref{v} it suffices to prove the Proposition for $v$ satisfying \eqref{vt}. \\

 Applying the Lemma \ref{enrg} for $L_\phi$ implies that
\begin{align*}
 & \dt \int \abs{v}^2 dx =  (\left[2 a_2 + \frac{6 a_3\dx \phi}{\phi}- 3 \dx a_3\right]\dx v, \dx v) + (\tld b_0 v,v) - 2 Re( A_0(\phi v),\phi v) + (f,\phi v)
\end{align*}
where $\tld b_0$ is obtained from the Lemma \ref{enrg} applied to $L_\phi$. With $\phi$ chosen from the Lemma \ref{gauCh}, this implies
\begin{align*}
   \dt \int \abs{v}^2 dx \le  -c_\delta(\jap{x}^{-2\delta}v,v) + (\tld b_0 v,v) - 2 Re( A_0(\phi v),\phi v) + (f,\phi v)
\end{align*}
 By (A4), $A_0:L^2\to L^2$ is bounded. Moreover, by the Definition \ref{gau} and $(A2)$, $\phi \in L^\infty$ and $\tld b_0\in L^\infty$. Hence
\begin{align*}
  \dt \int \abs{v}^2 \le C(\int \abs{v}^2dx + \norm{v}_{L^2}\norm{f}_{L^2}) -\norm{\jap{x}^{-\delta} \dx v}_{L^2}^2
\end{align*}
For $c_\delta=0$ an application of Grownwall Lemma implies \eqref{L2e} for $v$.\\

Whereas moving $\dx v$ term to the left hand side for $c_\delta=1$ and integrating in time gives
\begin{align*}
  & \int_0^T \norm{\jap{x}^{-\delta}\dx v}^2 dt \le C\int_0^T(\int \abs{v}^2dx + \norm{v}_{L^2}\norm{f}_{L^2})dt +\norm{v_0}_{L^2}^2 -\norm{v}_{L^2}^2\\
  & \le (C(1+T)-1)\sup_{0\le t\le T}\norm{v(t)}_{L^2}^2 +\norm{v_0}_{L^2}^2 + (\int_0^T\norm{f(t)}_{L^2} dt)^2
\end{align*}
Using \eqref{L2e} completes the proof of \eqref{L2g}.
\end{proof}
Proposition \ref{L2} can be strengthened to an $H^s$ estimate.
\begin{prop}  \label{apri} Let $L$ be as in \eqref{eq}, whose coefficients $a_j$ satisfy $(A1)$--$(A3)$. Then for any $s\in \R$ there exist  constants $C(s)$ and $\tld C(s,\delta)$ for any $\delta >\frac{1}{2}$, such that for any $u\in C^1_{[0,T]}H^s\cap C^0_{[0,T]}H^{s+3}$ the following estimates hold
    \begin{align}\label{apr1}
      \sup_{0\le t\le T} \norm{u(t)}_{H^s_x} \le Ce^{CT}(\norm{u(0)}_{H^s_x} + \int_0^T\norm{\dt u + Lu}_{H^s_x}dt)
    \end{align}
    \vspace{-10 pt}
    \begin{align*}
     & \sup_{0\le t\le T} \norm{u(t)}_{H^s_x} \le Ce^{CT}(\norm{u(T)}_{H^s_x} + \int_0^T\norm{-\dt u + L^*u}_{H^s_x}dt)
     \end{align*}
     where $L^*$ is the adjoint of $L$. Moreover
     \begin{align*}
     & \norm{\jap{x}^{-\delta}\dx u}_{L^2_{[0,T]}H^s_x} \le \tld C(1+\sqrt{T})e^{\tld CT}(\norm{u_0}_{H^s} +\int_0^T\norm{f(t)}_{H^s}dt)
    \end{align*}
\end{prop}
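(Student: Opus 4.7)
The plan is to conjugate \eqref{eq0} by the Fourier multiplier $\Lambda^s := \jap{D}^s$ and thereby reduce the $H^s$ bounds to the $L^2$ bounds already established in Proposition \ref{L2}. Setting $v = \Lambda^s u$, the function $v$ satisfies
\begin{equation*}
  \dt v + \Lambda^s L \Lambda^{-s} v = \Lambda^s f, \qquad v\initial = \Lambda^s u_0,
\end{equation*}
and writing $\Lambda^s L \Lambda^{-s} = L + [\Lambda^s, L]\Lambda^{-s}$, the pseudo-differential calculus gives an asymptotic expansion of the commutator. The only order-$2$ contribution comes from $[\Lambda^s, a_3\dx^3]\Lambda^{-s} = [\Lambda^s, a_3]\dx^3\Lambda^{-s}$, whose leading symbol is $-s(\dx a_3)\xi^4\jap{\xi}^{-2}$, corresponding at the operator level to $s(\dx a_3)\dx^2$ modulo lower order. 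Collecting terms,
\begin{equation*}
  \Lambda^s L \Lambda^{-s} = L' + A_0^{(s)},\qquad L' := a_3\dx^3 + (a_2 + s\,\dx a_3)\dx^2 + a_1'\,\dx + a_0',
\end{equation*}
where $a_1', a_0'$ are real polynomial combinations of $a_j$ and their $x$-derivatives, and $A_0^{(s)} \in C^0_{[0,T]}S^0$.

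The next step is to verify that $L'$ satisfies $(A1)$--$(A3)$ and that $A_0^{(s)}$ satisfies $(A4)$, so that Proposition \ref{L2} applies to $v$. Assumption $(A1)$ is unchanged since $L'$ keeps the leading coefficient $a_3$, and $(A2)$ holds with $N$ chosen sufficiently large in terms of $s$. The crucial $(A3)$ survives because
\begin{equation*}
  \int_0^x \frac{a_2(y,t) + s\,\dx a_3(y,t)}{\abs{a_3(y,t)}}\,dy = \int_0^x \frac{a_2(y,t)}{\abs{a_3(y,t)}}\,dy + s\bigl(\log\abs{a_3(x,t)} - \log\abs{a_3(0,t)}\bigr),
\end{equation*}
and the logarithmic term is bounded in $L^\infty_{T,x}$ and in $C^1_{[0,T]}$ thanks to $(A1)$ and $(A2)$. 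The $S^0$-seminorms of $A_0^{(s)}$ are controlled by finitely many derivatives of the $a_j$, so $(A4)$ holds. Proposition \ref{L2} then delivers \eqref{L2e} and \eqref{L2g} for $v$, which via $\norm{v}_{L^2}=\norm{u}_{H^s}$ and $\norm{\Lambda^s f}_{L^2}=\norm{f}_{H^s}$ yields \eqref{apr1}.

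The adjoint estimate follows by time reversal: the formal adjoint $L^*$ is still a third-order real differential operator $\sum_{j=0}^3 \tld a_j \dx^j$ with $\tld a_3 = -a_3$ and $\tld a_2 = a_2 - 3\dx a_3$, and the same style of argument shows that $L^*$ satisfies $(A1)$--$(A3)$ with constants controlled by some $C_N$; substituting $\tld u(\tau,x) = u(T-\tau, x)$ converts $-\dt u + L^* u = g$ into a forward equation in $\tau$ to which the forward estimate applies. For the local smoothing bound I would combine \eqref{L2g} for $v$ with the identity
\begin{equation*}
  \Lambda^s \jap{x}^{-\delta}\dx u = \jap{x}^{-\delta}\dx v + [\Lambda^s, \jap{x}^{-\delta}]\dx u,
\end{equation*}
where the commutator is a PDO of order $s-1$ and absorbs into $\norm{u}_{L^2_{[0,T]} H^s}\le \sqrt{T}\sup_t \norm{u(t)}_{H^s}$, already controlled by \eqref{apr1}.

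The main obstacle is verifying that the pseudo-differential conjugation preserves the delicate Mizohata-type condition $(A3)$. The key point is that the only dangerous order-$2$ correction has the form $s\,\dx a_3\,\dx^2$, whose contribution to the gauge integral is the total $x$-derivative of $s\log\abs{a_3}$ and is therefore bounded solely by non-degeneracy $(A1)$; all remaining corrections are of order $\le 1$ and lump into either $a_1', a_0'$ or the $S^0$ remainder $A_0^{(s)}$. This stability of $(A3)$ under conjugation by $\Lambda^s$ is precisely what lifts the $L^2$ theory of Proposition \ref{L2} to the $H^s$ theory for arbitrary real $s$.
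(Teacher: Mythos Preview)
Your proposal is correct and is essentially the paper's own argument: conjugate by $J^s=\jap{D}^s$, expand $[J^s,L]J^{-s}$ via the pseudodifferential calculus to identify the new second-order coefficient $a_2+s\,\dx a_3$, observe that $(A3)$ persists because the correction integrates to $s\log\abs{a_3}$, and then invoke Proposition~\ref{L2}; the adjoint estimate is handled by the time reversal $t\mapsto T-t$ exactly as you describe. Your commutator treatment of the local smoothing bound is slightly more explicit than the paper's terse ``Lemma~\ref{Ls}, Remark~\ref{L*} and Proposition~\ref{L2} imply Proposition~\ref{apri}'', but the content is the same.
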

\begin{coro}\label{Hor}
  By the Theorem 23.1.2 on page 387 in \cite{Hor:v3}, the proof of the Theorem \ref{P} reduces to the Proportion \ref{apri}.
\end{coro}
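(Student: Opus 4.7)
The plan is to verify that Proposition \ref{apri} furnishes exactly the pair of a priori bounds that serve as the hypotheses of Hörmander's Theorem 23.1.2 in \cite{Hor:v3}, so that its conclusion delivers Theorem \ref{P} with no further work beyond the smoothing estimate. Set $P = \dt + L$ with formal adjoint $P^* = -\dt + L^*$. Hörmander's theorem asserts that if one has a forward $H^s$-estimate for $P$ of the shape \eqref{apr1}, together with a backward estimate of the same shape for $P^*$ at a suitably paired Sobolev exponent, then for every $(u_0,f)\in H^s\times L^1_{[0,T]}H^s$ there exists a unique $u\in C^0_{[0,T]}H^s$ solving \eqref{eq} in the sense of distributions, satisfying the bound coming from the forward estimate and depending continuously on the data.

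First I would identify the two displayed inequalities of Proposition \ref{apri} with the two hypotheses of the theorem: the first is the forward estimate for $P$, and the second is the backward estimate for $P^*$ (backward in the sense that the norm of $u$ is controlled by its value at the terminal time $t=T$ plus the source). Since both estimates hold for every $s\in\R$, one may specialize $s$ to match whatever dual exponent Hörmander's formulation requires; the coefficients of $L^*$ differ from those of $-L$ only by lower-order terms obtained by transposing derivatives, and these corrections preserve $(A1)$--$(A3)$, so Proposition \ref{apri} itself applies to $L^*$ and yields the backward estimate automatically. Uniqueness then follows from the forward estimate by setting $u_0=0$ and $f=0$, while existence is produced by Hörmander's standard Hahn--Banach duality argument driven by the backward estimate on $P^*$; the estimate \eqref{Hs} of Theorem \ref{P} is the forward estimate \eqref{apr1} read on the constructed solution, and continuous dependence on $(u_0,f)$ is then immediate from linearity.

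It remains to produce the local smoothing bound \eqref{Sm}. For this I would regularize: mollify the data $u_0$, the source $f$, and, if necessary, the coefficients, so that Hörmander's theorem produces smooth solutions $u^\eps$ of the regularized Cauchy problem. The third estimate of Proposition \ref{apri} applies to each $u^\eps$ and yields a uniform bound for $\jap{x}^{-\delta}\dx u^\eps$ in $L^2_{[0,T]}H^s_x$. Passing to the limit $\eps\to 0$, uniqueness identifies the weak limit with the solution $u$ produced in the previous step, and the weighted $L^2_{[0,T]}H^{s+1}_x$ norm is lower semi-continuous under weak convergence, so the uniform bound is inherited by $u$, which is precisely \eqref{Sm}.

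The main obstacle is one of bookkeeping rather than analysis: one must confirm that the precise statement of Hörmander's Theorem 23.1.2 (the measurability and regularity assumptions on the coefficients of $P$, the exact pairing of Sobolev indices between $P$ and $P^*$, and the sense in which distributional solutions are sought) lines up with the hypotheses furnished by Proposition \ref{apri}. The regularity assumption $(A2)$, which demands $N$ bounded spatial derivatives of $a_j$ for every $N\ge 0$, together with the validity of Proposition \ref{apri} at arbitrary $s\in\R$, is designed precisely to eliminate any mismatch at this step, so that the corollary follows as a direct citation.
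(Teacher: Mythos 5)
Your proposal is correct and matches the paper's intent exactly: the paper offers no argument beyond the citation, and Hörmander's Theorem 23.1.2 is precisely the abstract machinery that converts the forward estimate for $\dt+L$ and the backward estimate for $-\dt+L^*$ from Proposition \ref{apri} into existence (by duality), uniqueness, and continuous dependence in $C^0_{[0,T]}H^s$. Your additional regularization step to transfer the smoothing bound \eqref{Sm} to the distributional solution fills in a detail the paper leaves implicit, and is the right way to do it.
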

The Proposition \ref{apri} is reduced to the Proposition \ref{L2}. Observe, that
\begin{align*}
f= \dt u + Lu \text{ if and only if } J^s f = \dt J^s u + L J^s u + [J^s L] J^{-s} J^s u
\end{align*}
where $J^s$ is a Pseudo Differential Operator with symbol $\jap{\xi}^{s}$. Therefore to prove \eqref{apr1} it suffices to show that the Proposition \ref{L2} applies to the operator $\tld L = L +[J^s L] J^{-s}$.
\begin{lem}\label{Ls}
  Let $\tld L = L +[J^s L] J^{-s}$ with $L$ from \eqref{eq} that satisfies $(A1)$ and $(A2)$. Then
  \begin{align}\label{Lsf}
  \begin{split}
    & \tld L =a_3\dx^3 + a_0  +\sum_{i=1}^2 (a_j + \tld a_j) \dx^j + A_s(t, x,\dx)\\
    & \text{ with } \tld a_2 = s \dx a_3 \text{ and } \tld a_1 = s\dx a_2 + \frac{s(s-1)}{2} \dx^2 a_3
  \end{split}
  \end{align}
  where $A_s \in S^0$, whose semi-norms depend on the coefficient bounds $(A2)$ for $N=N(s)$ and hence satisfies $(A4)$.\\

  Furthermore, the coefficients $\tld a_j$ for $i=1$, $2$ satisfy $(A2)$--$(A3)$.
\end{lem}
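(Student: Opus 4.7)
The plan is to recognize that $\tld L$ is really the $J^s$-conjugate of $L$: since $[J^s,L] = J^sL-LJ^s$, the defining relation $\tld L = L + [J^s,L]J^{-s}$ collapses to $\tld L = J^sLJ^{-s}$. Because $J^{\pm s}$ are Fourier multipliers, they commute with each $\dx^j$, so
\begin{align*}
\tld L = \sum_{j=0}^3\bigl(J^s\,a_j\,J^{-s}\bigr)\dx^j,
\end{align*}
which reduces the whole lemma to analyzing the scalar conjugation $J^s a_j J^{-s}$ as a pseudodifferential operator for each $j$.

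For this I would invoke the standard PDO composition formula. Because $\jap{\xi}^{-s}$ has no $x$-dependence, the outer composition with $J^{-s}$ is exact and one obtains the asymptotic expansion
\begin{align*}
\sigma\bigl(J^s a_j J^{-s}\bigr)(x,\xi)\sim \sum_{\alp\ge 0}\frac{(-i)^\alp}{\alp!}\bigl(\dxi^\alp\jap{\xi}^s\bigr)\bigl(\dx^\alp a_j\bigr)(x,t)\,\jap{\xi}^{-s}.
\end{align*}
The $\alp=0$ term reproduces $a_j$, while $\dxi^\alp\jap{\xi}^s\cdot\jap{\xi}^{-s}$ is a symbol of order $-\alp$. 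After multiplying by $(i\xi)^j$, only $(j,\alp)\in\{(3,1),(3,2),(2,1)\}$ yields a symbol of positive order; every other term is in $S^0$, and their aggregate is defined to be $A_s$. Using $\dxi\jap{\xi}^s\cdot\jap{\xi}^{-s}=s\xi\jap{\xi}^{-2}$ together with $\xi^2\jap{\xi}^{-2}=1-\jap{\xi}^{-2}$ (and the analogous second-derivative formula for $(3,2)$), each of the three surviving symbols splits as a polynomial in $\xi$ plus an $S^0$-remainder; matching these polynomials against $(i\xi)^2$ and $(i\xi)^1$ produces precisely $\tld a_2 = s\dx a_3$ from $(3,1)$ and $\tld a_1 = s\dx a_2 + \tfrac{s(s-1)}{2}\dx^2 a_3$ from $(2,1)$ and $(3,2)$. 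Only finitely many derivatives of $a_2,a_3$ appear, so the $S^0$ semi-norms of $A_s$ are controlled by $C_N$ for some $N=N(s)$, which is $(A4)$.

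Finally, the regularity assertion is routine: $\tld a_2 = s\dx a_3 \in C^0_{[0,T]}\B_x^{N+2}\cap C^1_{[0,T]}\B_x^0$ and $\tld a_1 \in C^0_{[0,T]}\B_x^{N+1}$ after enlarging $N$ by at most two, which are exactly the $(A2)$-slots for $a_2$ and $a_1$. For $(A3)$, using $a_3>0$ as in Remark \ref{-t},
\begin{align*}
\int_0^x\frac{\tld a_2(y,t)}{a_3(y,t)}\,dy = s\int_0^x\frac{\dx a_3(y,t)}{a_3(y,t)}\,dy = s\log\frac{a_3(x,t)}{a_3(0,t)},
\end{align*}
which lies in $L^\infty_T$ by $(A1)$, and whose $\dt$-derivative is the difference of $s\dt a_3/a_3$ at $x$ and at $0$, again in $L^\infty_T$ by $(A1)$ and $\dt a_3\in L^\infty_T$. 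The main obstacle is not the identification of $\tld a_1,\tld a_2$, which is a clean algebraic step, but the careful verification that the tail of the expansion ($\alp\ge 3$ for $j=3$, $\alp\ge 2$ for $j=2$) together with the $S^0$ remainders above packages into a single $A_s\in S^0$ whose seminorms depend polynomially on $C_N$; this is standard but tedious and amounts to iterating the symbol remainder estimate and the Leibniz rule on $\xi^k\jap{\xi}^{-2\ell}$.
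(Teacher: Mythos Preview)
Your proof is correct and follows essentially the same route as the paper: both rely on the asymptotic expansion from the pseudodifferential calculus, extract the second- and first-order coefficients via the substitution $\xi^2=\jap{\xi}^2-1$, and verify $(A3)$ for $\tld a_2$ by the explicit logarithmic integral. Your observation that $\tld L = J^sLJ^{-s}$ and hence $\tld L=\sum_j(J^s a_j J^{-s})\dx^j$ is a clean reorganization that the paper does not make explicit (it expands $[J^s,a_3\dx^3]$ directly), but the underlying computation and the verification of $(A2)$--$(A3)$ are identical.
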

\begin{proof}
From the first term in the Calculus of PDO $[J^s L] J^{-s}\in S^{2}$. A further expansion of $[J^s, a_3\dx^3]$ gives:
\begin{align*}
  \sigma([J^s, a_3\dx^3])=\sum_{1\le \abs{\alpha}\le 2} \frac{i^{-\abs{\alpha}}}{\alpha!} \dxi^\alpha \jap{\xi}^s \dx^\alpha (a_3(i\xi)^3) \mod S^{s}\\
  = s\dx a_3 (i\xi)^2\jap{\xi}^s + \frac{s(s-1)}{2} \dx^2 a_3 i\xi \jap{\xi}^s \mod S^{s}
\end{align*}
where the substitution $\xi^2 = \jap{\xi}^2 - 1$ was used and the terms of order $s$ were absorbed into the remainder. Performing a similar computation for the remaining terms in $[J^s L]$ and composition with $J^{-s}$ verifies \eqref{Lsf}.\\

It is immediate from \eqref{Lsf} that $\tld a_j$ satisfies $(A2)$. To verify $(A3)$ observe that
\begin{align*}
  \int_0^x \frac{ \,\tld a_2(y,t)}{\abs{a_3(y,t)}}d y  = s \sign(a_3) \log \frac{a_3(x,t)}{a_3(0,t)}\in C^1_{[0,T]}L^\infty_x
\end{align*}
by $(A1)$ and $(A2)$.
\end{proof}
\begin{remark}\label{L*}
  A simple computation shows that the adjoint $L^*$ of the operator $L$ from \eqref{eq} is
\begin{align*}
  L^* = &-a_3\dx^3 +  (a_2-3 \dx a_3)  \dx^2 + (a_1+2\dx a_2 - 3 \dx^2 a_3)\dx\\
  & +(a_0-\dx a_1+\dx^2 a_2 -\dx^3 a_3)
\end{align*}
whereas a substitution $t\to T-t$ transforms \eqref{eq} to
\begin{align*}
\begin{cases}
  -\dt u(T-t) + L u (T-t)= f(T-t)\\
  u(T-t)\mid_{t=0} = u(T)
\end{cases}
\end{align*}
Both $L^*$ and $L(T-t)$ satisfy $(A1)$--$(A3)$.
\end{remark}
\begin{coro}
  Lemma \ref{Ls}, Remark \ref{L*} and the Proposition \ref{L2} imply the Proposition \ref{apri}.
\end{coro}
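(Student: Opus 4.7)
The plan is to conjugate the equation by $J^s$ and reduce the $H^s$ estimates for $u$ to the $L^2$ framework of Proposition \ref{L2}. Setting $v := J^s u$, the equation $\dt u + Lu = f$ is equivalent to
\[
\dt v + \tld L v = J^s f, \qquad \tld L := L + [J^s, L] J^{-s}.
\]
Lemma \ref{Ls} supplies the crucial structural information: $\tld L$ has the form required by \eqref{eq0}, with third-order coefficient still $a_3$, modified lower-order coefficients $a_j + \tld a_j$ continuing to satisfy (A1)--(A3), and an $S^0$ remainder $A_s$ obeying (A4). Hence Proposition \ref{L2} applies to $v$, and since $\norm{v}_{L^2} = \norm{u}_{H^s}$ and $\norm{J^s f}_{L^2} = \norm{f}_{H^s}$, the $L^2$ estimate \eqref{L2e} for $v$ is exactly the forward bound \eqref{apr1} for $u$.

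For the backward-in-time inequality in Proposition \ref{apri}, I would invoke Remark \ref{L*}: both $L^*$ and the time-reversed operator $L(T-t,x)$ continue to satisfy (A1)--(A3), so applying the same $J^s$-conjugation to the equation $-\dt u + L^* u = g$ after the substitution $t \mapsto T-t$ produces the desired bound from the first part.

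For the local smoothing estimate, Proposition \ref{L2} applied to $v$ yields
\[
\norm{\jap{x}^{-\delta} \dx v}_{L^2_{[0,T]\times x}} \les (1+\sqrt{T}) e^{\tld C T}\bigl(\norm{u_0}_{H^s} + \int_0^T \norm{f(t)}_{H^s}\,dt\bigr).
\]
Since $\dx v = J^s \dx u$, the left-hand side equals $\norm{\jap{x}^{-\delta} J^s \dx u}_{L^2_{t,x}}$, while the target $\norm{\jap{x}^{-\delta}\dx u}_{L^2_{[0,T]} H^s_x}$ equals $\norm{J^s \jap{x}^{-\delta}\dx u}_{L^2_{t,x}}$. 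Their difference is $\norm{[J^s,\jap{x}^{-\delta}]\dx u}_{L^2_{t,x}}$; by the standard PDO calculus $[J^s,\jap{x}^{-\delta}]$ has order $s-1$, so this error is dominated by $\sqrt{T}\sup_t \norm{u(t)}_{H^s}$, and the already-established \eqref{apr1} absorbs it into the right-hand side.

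The real content is Lemma \ref{Ls}: preservation of the structural hypotheses (A1)--(A3) under $J^s$-conjugation is what allows the whole reduction to close. The only technical point requiring care is the commutator estimate in the smoothing bound, but this is routine once the PDO calculus and the prior $H^s$ estimate are in hand.
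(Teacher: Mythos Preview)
Your proposal is correct and follows exactly the approach the paper sets up in the paragraphs preceding Lemma~\ref{Ls}: conjugate by $J^s$, invoke Lemma~\ref{Ls} to verify that $\tld L$ fits the hypotheses of Proposition~\ref{L2}, and use Remark~\ref{L*} for the adjoint/backward estimate. The one point you make explicit that the paper leaves unspoken is the commutator $[J^s,\jap{x}^{-\delta}]\dx \in S^s$ needed to pass from $\norm{\jap{x}^{-\delta}\dx J^s u}_{L^2_{t,x}}$ to $\norm{\jap{x}^{-\delta}\dx u}_{L^2_t H^s_x}$; your handling of this is correct and the $\sqrt{T}\sup_t\norm{u(t)}_{H^s}$ error is indeed absorbed by \eqref{apr1}.
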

This completes the proof of Theorem \ref{P} by the Corollary \ref{Hor}.
\section{Ill-posedness}\label{ills}
Ill-posedness is proved by justifying the formal geometrical optics argument, cf \cite{CrGdm90}, for a special choice of initial data. It is instructive to first consider the case of constant dispersion $a_3\equiv 1$:
\begin{align}\label{cd}
  \dt v + \dx^3 v + \sum_{j=0}^2 c_2(x,t) \dx^j v= g
\end{align}
Then the condition $(A3N')$ is equivalent to \[\sup_{N>0,x_0}\int_{x_0-N}^{x_0} c_2(x',0) dx'=\infty\] The general case of \eqref{eq} is later reduced to illposedness for \eqref{cd}. For this reduction it is desirable to relax the condition (A2) to smooth, but not necessarily bounded coefficients:
\begin{description}
  \item[(A2')] Assume that $c_2 \in C^1_t C^0_x\cap C^0_tC^2_x$ and $c_1$, $c_0 \in C^0_{t,x}$.
\end{description}
  From now on, the notation $C=C(\alpha)$ means that there exists a constant $C\ge 1$ that depends continuously $\alpha$ and may depend on the norms of coefficients $c_j$ evaluated on some compact set, whose size also depends on $\alpha$. Constants required to be small are reciprocal of the large constants.\\

 The proof of the illposedness for \eqref{cd} rests on the following explicit construction that violates the estimate \eqref{Hs} for $s=0$. Let $\psi(x) =\eta^{-\frac{1}{2}}\psi_0(\frac{x-x_0}{\eta})$, where $\psi_0\in C^\infty_0([-1,1])$, $\norm{\psi_0}_{L^2}=1$ and the small parameter $0<\eta\le 1$ is to be chosen. Then
 \begin{align}\label{psi}
   \supp \psi \subset [x_0-\eta,x_0+\eta],\,\, \norm{\psi}_{L^2}=1\text{ and }\norm{\psi}_{H^k} \le C\eta^{-k}\text{ for }k\ge 0.
 \end{align}
Define
\begin{align}\label{cdw}
  v(x,t) :=e^{iS} w, \text{ with }S=x\xi + t\xi^3\text{ and }w=e^{\frac{1}{3}\int_x^{x_0} c_2(x',t) dx'} \psi(x+3\xi^2 t).
\end{align}
with parameters $\xi\ge 1$, $x_0$, $0<\eta\le 1$ to be chosen. It is immediate from (A2') that $w\in C^1_t C^0_x \cap C^0_tC^3_x$. A substitution of the ansatz $v=e^{iS} w$ into \eqref{cd} gives
\begin{align*}\hspace{-30 pt}\begin{split}
 & g= \frac{1}{3}\int_x^{x_0} \dt c_2(x',t) dx'\cdot v +e^{iS}\left\{( 3 i \xi \dx^2 w + \dx^3 w) + 2 c_2  i \xi \dx w +c_2 \dx^2 w+c_1(i \xi \cdot w + \dx w) + c_0 w \right\}.
\end{split}
\end{align*}
Taking absolute values gives
\begin{align}\label{rem}
  \abs{g(x,t)}\le \xi \sum_{j=0}^3g_j(x,t)\abs{\dx^j w(x,t)}
\end{align}
 where $g_j(x,t)$ are continuous non-negative functions independent of $\xi$.\\

Observe from \eqref{psi} and \eqref{cdw}, that $\supp_x w(x,t) \subset [x_0-3\xi^2t-\eta ,x_0-3\xi^2t+\eta]$. Therefore,
 \begin{align*}
   \int_x^{x_0} c_2(x',t) dx'= \int_{x_0-3\xi^2t}^{x_0} c_2(x',t) dx' +I(x,t)
 \end{align*}
 where on the support of $w(x,t)$, $\abs{I(x,t) } \le C(x_0-3\xi^2t)\eta$.\\

 Using \eqref{cdw}, \eqref{psi} and the estimate above implies $0<\eta \le \tfrac{1}{C(x_0-3\xi^2t)}$ gives
 \begin{align}\label{cd-s}
   \norm{v_n(t)}_{L^2_x} \approx_2 e^{\frac{1}{3}\int_{x_0-3\xi^2t}^{x_0} c_2(x',t) dx'}
 \end{align}
 with comparability constant chosen to be $2$. The estimates \eqref{rem} and \eqref{cd-s} are the main ingredient for the proof of the following theorem.
 \begin{thm}\label{illc}
   Suppose
   \begin{align}\label{a2c}
     \sup_{N>0,x_0}\int_{x_0-N}^{x_0} c_2(x',0) dx'=\infty
   \end{align}
    Then there exists a sequence $t_n\to 0$, and sequences $x_0^n$ and $\xi_n$, $\eta_n$ such that $v_{n} \in C^1_tL^2_x\cap C_t^0 H^{3}_x$ from \eqref{cdw} and $g_n$ from \eqref{cd} satisfy
  \begin{align}\label{ille}
\norm{v_n(t_n)}_{L^2_x} \ge n(\norm{v_n(0)}_{L^2_x}+\int_0^{t_n}\norm{g_n(t)}_{L^2_x} dt)>0\\
\label{gos}\int_0^{t_n}\norm{v_n(t)}_{L^2_x} \le \frac{1}{n}\norm{v_n(0)}_{L^2}
  \end{align}
 \end{thm}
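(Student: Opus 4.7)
The ansatz $v_n = e^{iS} w_n$ in \eqref{cdw} has been arranged so that the principal $\xi^3$-order terms of $(\partial_t + \partial_x^3 + c_2\partial_x^2)v_n$ cancel: the amplitude $A(x,t) = e^{\frac{1}{3}\int_x^{x_0} c_2(x',t)\,dx'}$ is precisely the WKB amplitude that neutralizes the antidiffusion $c_2$ along the bicharacteristic $x + 3\xi^2 t = \text{const}$, which is why \eqref{rem} carries only a single factor of $\xi$ out front. The proof now amounts to choosing $x_0^n, N_n, \xi_n, \eta_n, t_n$ so that the amplification $e^{M_n/3}$, with $M_n := \int_{x_0^n - N_n}^{x_0^n} c_2(x',0)\,dx'$, dominates every other scale in the problem while $t_n \to 0$.

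I would first apply \eqref{a2c} to pick, for each $n$, points $x_0^n$ and $N_n > 0$ with $M_n \ge n$, and then set $\xi_n^2 := N_n/(3 t_n)$, which places the support of $w_n(\cdot, t_n)$ at $x_0^n - N_n$---the left endpoint of the integration window defining $M_n$. For $\eta_n$ small enough (depending only on the $C^0$ norms of $c_2$, $\partial_t c_2$, $\partial_x c_2$ on a compact neighborhood of $[x_0^n - N_n, x_0^n] \times [0, 1]$) the approximation \eqref{cd-s} holds uniformly for $t \in [0, t_n]$ with comparability constant $2$, yielding $\tfrac{1}{2} \le \|v_n(0)\|_{L^2} \le 2$ and $\tfrac{1}{2}\, e^{(M_n - 1)/3} \le \|v_n(t_n)\|_{L^2} \le 2\, e^{(M_n + 1)/3}$. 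The Sobolev scaling from \eqref{psi} together with the near-constancy of $A$ on $\supp w_n$ gives $\|\partial_x^j w_n\|_{L^2} \le C_n \eta_n^{-j}\|v_n\|_{L^2}$ for $j = 0, 1, 2, 3$, so plugging into \eqref{rem} yields
\begin{equation*}
\int_0^{t_n}\|g_n(t)\|_{L^2}\,dt \le C_n'\, \xi_n\, t_n\, \eta_n^{-3}\, e^{(M_n + 1)/3}.
\end{equation*}

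The crucial identity is $\xi_n^2 t_n = N_n/3$, which gives $\xi_n t_n = \sqrt{N_n t_n/3} \to 0$ as $t_n \to 0$. Thus, after $M_n, N_n, x_0^n, \eta_n$ are fixed for each $n$, the entire prefactor $\xi_n t_n \eta_n^{-3}$ can be made arbitrarily small by shrinking $t_n$ alone. I would then choose $t_n \in (0, 1/n]$ small enough that both $C_n'\, \xi_n t_n \eta_n^{-3}\, e^{(M_n + 1)/3} \le \tfrac{1}{4n}\, e^{(M_n - 1)/3}$ and $2\, t_n\, e^{(M_n + 1)/3} \le \tfrac{1}{2n}$. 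The first inequality, combined with $\|v_n(0)\|_{L^2} \le 2 \le \tfrac{1}{2n}\|v_n(t_n)\|_{L^2}$ (valid for $n$ large, since $\|v_n(t_n)\|_{L^2} \ge \tfrac{1}{2} e^{(n-1)/3}$), yields \eqref{ille}; the second yields \eqref{gos}. The main subtlety is recognizing that although $t_n \to 0$ forces $\xi_n \to \infty$, the identity $\xi_n^2 t_n = N_n/3$ still drives the error prefactor $\xi_n t_n \eta_n^{-3}$ to zero---without this cancellation the construction would not close.
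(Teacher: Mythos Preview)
Your approach is essentially the paper's: fix $x_0,N$ from \eqref{a2c}, use the ansatz \eqref{cdw}, exploit the relation $\xi^2 t_n=N/3$ so that the residual in \eqref{rem} contributes $O(\xi t_n)=O(\xi^{-1})$ after time integration, and then send the remaining free parameter to its limit. Whether one thinks of $\xi$ as the free parameter (paper) or $t_n$ (you) is immaterial.

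There is, however, one genuine slip. You assert the uniform-in-$t$ bound $\norm{v_n(t)}_{L^2}\le 2\,e^{(M_n+1)/3}$ on $[0,t_n]$ and use it both for \eqref{gos} and inside your estimate on $\int_0^{t_n}\norm{g_n}$. This is not justified: by \eqref{cd-s}, $\norm{v_n(t)}_{L^2}\approx_2 \exp\bigl(\tfrac13\int_{x_0-3\xi^2 t}^{x_0}c_2(x',t)\,dx'\bigr)$, and when $c_2$ changes sign the \emph{partial} integral over $[x_0-3\xi^2 t,x_0]$ can exceed the full integral $M_n$ by an arbitrary amount. The correct uniform bound is $\norm{v_n(t)}_{L^2}\le C(x_0^n,N_n)$, where the constant depends on $\sup_{[x_0-N-1,x_0+1]\times[0,1]}|c_2|$; this is exactly what the paper uses. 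With that replacement your argument closes unchanged, since you are already allowing $t_n$ to be chosen last, small enough to beat any constant depending only on $(x_0^n,N_n,\eta_n)$. A second, purely cosmetic point: the inequality $\tfrac12 e^{(M_n-1)/3}\le\norm{v_n(t_n)}_{L^2}$ requires the $\partial_t c_2$ correction in \eqref{cd-s} to be small, which in turn needs $t_n\le c/N_n$; since you choose $t_n$ last this is fine, but the logical order in your write-up invokes that bound before $t_n$ has been constrained.
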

 \begin{proof}
   By \eqref{a2c}, there exist $x_0\in \R$ and $N>0$, such that
   \begin{align}\label{A3N}
     e^{\frac{1}{3}\int_{x_0-N}^{x_0} c_2(x',0) dx'} \ge 16 n
   \end{align}
   Let $t_n =\dfrac{N}{3\xi^2}$ with $\xi=\xi(x_0,N)$ to be chosen below. From now on only $t$, such that $0\le t\le t_n$ will be considered. For this range of $t$, the small parameter $\eta=\eta(x_0-3\xi^2t)>0$ can be chosen to depend only on $(x_0,N)$. As the choice of $x_0$ and $\eta$ completely determines $\psi$, $\psi$ is independent of $\xi$.\\

To estimate the right hand side of \eqref{ille}, observe from \eqref{cd-s},
   \begin{align*}
      \frac{1}{2}\le \norm{v_n(0)}_{L^2_x} \le 2
   \end{align*}
    Furthermore, \eqref{psi} and \eqref{cdw} imply that $\supp_x w(x,t) \subset [x_0-N-1,x_0+1]$ for $0\le t\le t_n$. Hence, $w$, $v$ and $g$ have compact supports independent of $\xi$ and are bounded. More precisely, \eqref{psi}, \eqref{cdw} and \eqref{rem} imply \[ \norm{g(t)}_{L^2} \le C(N,x_0)\xi \] Integrating this inequality in time gives
   \begin{align}
    \int_0^{t_n}  \norm{g(t)}_{L^2} dt \le \frac{C(N,x_0)}{\xi}
   \end{align}
   Therefore, for $\xi\ge C(N,x_0)$, $\int_0^{t_n}  \norm{g(t)}_{L^2} \le 1$. This finishes the analysis of the right hand side of \eqref{ille}.\\
    Similarly, \eqref{cd-s} implies that for $0\le t\le t_n$, $      \norm{v(t)}_{L^2_x} \le C(x_0,N)$. Hence, for $\xi\ge C(x_0,N) $
      \[\int_0^{t_n}\norm{v_n(t)}_{L^2} dt \le \frac{1}{2n}\le\frac{1}{n}\norm{v_n(0)}_{L^2_x}\]

  To finish the proof it suffices to show that there exists $\xi=\xi_n(x_0,N)\ge C(x_0,N)$, such that
   \begin{align}\label{c-s}
     \norm{v_n(t_n)}_{L^2_x} \ge 4n.
   \end{align}
   This estimate requires a comparison of \eqref{cd-s} and \eqref{A3N}. To this end, by \eqref{cd-s} and the Fundamental Theorem of Calculus
   \begin{align*}
    \norm{v_n(t_n)}_{L^2_x} \ge \frac{1}{2} e^{\frac{1}{3}\int_{x_0-3\xi^2 t_n}^{x_0} c_2(x',0) dx'} e^{\frac{1}{3}\int_{x_0-3\xi^2 t_n}^{x_0} \int_0^{t_n}\dt c_2(x',t) dx'dt}
   \end{align*}
   Whereas, using $t_n=\tfrac{N}{3\xi^2}$,
   \begin{align*}
   \abs{\int_{x_0-N}^{x_0} \int_0^{\frac{N}{3\xi^2}} \dt c_2(x',t)dt dx'} \le \frac{C(x_0,N)}{\xi^2} \le \log 2
 \end{align*}
for $\xi\ge C(x_0,N)+1$. Combining the last two estimates and \eqref{A3N} implies \eqref{c-s}.
 \end{proof}
 \subsection{Reduction to constant dispersion}
 Illposedness for \eqref{eq} relies on a change of variables to reduce to \eqref{cd}.
 \begin{defn}\label{y}
  For $ a_3$ satisfying (A1) and (A2) define
 \begin{align}\label{yeq}
   y(x,t) = \int_0^x a_3^{-\frac{1}{3}}(x',t)dx'
 \end{align}
 \end{defn}
 This construction allows to replace the roles of $x$ and $y$ as follows.
 \begin{lem}\label{x}
 Consider
   \begin{align}\label{y2}
   y-y(x,t) = 0
 \end{align}
 with $y(x,t)$ from \eqref{yeq}. Then there exists a unique smooth function $x=x(y,t)$ that satisfies \eqref{y2}. Moreover,
 \begin{align*}
   \der{x}{y} = \frac{1}{\der{y}{x}} = a_3^{\frac{1}{3}}(x,t)
 \end{align*}
 \end{lem}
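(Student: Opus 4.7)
The plan is to apply the inverse function theorem to $y(\cdot,t)\colon \R \to \R$ for each fixed $t$, using assumption $(A1)$ to upgrade the usual local statement to a global diffeomorphism.

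First, by the Fundamental Theorem of Calculus applied to \eqref{yeq}, $\dx y(x,t) = a_3^{-\frac{1}{3}}(x,t)$. Invoking Remark \ref{a3+} we may assume $a_3>0$, and then $(A1)$ yields the uniform two-sided bound $\Lambda^{-\frac{1}{3}} \le \dx y(x,t) \le \lambda^{-\frac{1}{3}}$ on $\R\times[0,T]$. In particular $\dx y$ is bounded away from $0$, which is the only estimate that really matters.

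From this I would deduce that $y(\cdot,t)$ is a strictly increasing smooth bijection of $\R$ onto $\R$: strict monotonicity follows from $\dx y>0$, and surjectivity from the lower bound, which forces $y(x,t)\to\pm\infty$ as $x\to\pm\infty$. Since $y(0,t)=0$, for each $(y,t)$ there is a unique $x$ solving \eqref{y2}. Smooth dependence on $y$ follows from the one-variable inverse function theorem, with $\der{x}{y} = 1/\dx y = a_3^{\frac{1}{3}}$ by the chain rule. Joint smoothness in $(y,t)$ then comes from the implicit function theorem applied to $F(x,y,t):=y-y(x,t)$, which is smooth in $(x,t)$ by $(A2)$ and has $\dx F = -a_3^{-\frac{1}{3}}\neq 0$.

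There is no real obstacle here; the only point to highlight is that the non-degeneracy $(A1)$ is essential to make the inversion global rather than merely local, and to guarantee the identity $\der{x}{y}=a_3^{\frac{1}{3}}$ without dividing by a quantity that could vanish. All subsequent smoothness in $t$ is routine chain-rule bookkeeping from the regularity assumed on $a_3$.
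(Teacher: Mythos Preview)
Your argument is correct and follows essentially the same route as the paper: compute $\dx y=a_3^{-1/3}$ via the Fundamental Theorem of Calculus, invoke $(A1)$ to see it never vanishes, and then apply the implicit/inverse function theorem. You are in fact more careful than the paper, which simply cites the Implicit Function Theorem without discussing why the inverse is globally defined on all of $\R$; your monotonicity-plus-surjectivity argument fills that gap.
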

 \begin{proof}
   By (A1) and Fundamental Theorem of Calculus $\der{y}{x}(x,t)=a_3^{-\frac{1}{3}}(x,t)\neq 0$ for all $(x,t)$. An application of the Implicit Function Theorem for \eqref{y2} completes the proof.
 \end{proof}
Define
 \begin{align*}
v(y,t) = a_3^{-\frac{1}{3}}(x(y,t),t) u(x(y,t),t)
 \end{align*}
 using the Lemma \ref{x}. Equivalently
 \begin{align}\label{illg}
   u(x,t) = \frac{1}{\der{y}{x}} v(y,t).
 \end{align}
 From this definition, $L^2$ norms of $u$ and $v$ are comparable by (A1):
 \begin{align}\label{cm}
   \norm{u(t)}_{L^2_x}^2 = \int a_3(x,t) \abs{v(y,t)}^2 dy \approx_{\lambda,\Lambda} \norm{v(t)}_{L^2_y}^2.
 \end{align}
 A computation shows, that
 \begin{align*}
 & \dt u = \frac{1}{\der{y}{x}}(\dt v + \der{y}{t}\dy v - \frac{\dt\der{y}{x}}{\der{y}{x}}v)  & \dx u = \dy v - v\frac{\dx^2 y}{(\der{y}{x})^2}\\
& \dx^2 u = \dy^2 v \der{y}{x} +  \sum_{j=0}^1 b_j(\dx y,\dx^2 y)\dy^j v  & \dx^3 u = \dy^3 v (\der{y}{x})^2 + \sum_{j=0}^1 \tld b_j(\dx y,\dx^2 y,\dx^3 y)\dy^j v
 \end{align*}
 for smooth functions $b_j$ and $\tld b_j$. Using this computation and $a_3(x,t)(\der{y}{x})^3\equiv 1$ substitute $\eqref{illg}$ into \eqref{eq} to get
 \begin{align}\label{cdc}
   \dt v + \dy^3 v + \sum_{j=0}^2 c_j(y,t) \dy^j v = g
 \end{align}
where the coefficients $c_j$ satisfy \textbf{(A2')} and, in particular
\begin{align}\label{c2}
c_2(y,t) = a_2(x,t) a_3^{-\frac{2}{3}}(x,t);\hspace{10 pt} & g(y,t) = \der{y}{x} f(x,t)
\end{align}
The relationship between $f$ and $g$ is identical to \eqref{illg}, thus \eqref{cm} implies
\begin{align}\label{fg}
  \norm{f(t)}_{L^2_x}\approx_{\lambda,\Lambda} \norm{g(t)}_{L^2_y}
\end{align}
Therefore, \eqref{eq} can be reduced to \eqref{cd}, which was analyzed in the Theorem \ref{illc}.
\begin{lem}\label{Negc}
  Suppose $(A1)$, $(A2)$ and $(A3N')$ hold. Let $s\in \R$. Then there exists a sequence $u_n\in C^1H^s\cap C_t^0 H^{s+3}$ and $t_n\to 0$, such that
  \begin{align}\label{N}
    \norm{w_n(t_n)}_{H^s} \ge n(\norm{w_n(0)}_{H^s_x}+\int_0^{t_n}\norm{(\dt + L)w_n(t)}_{H^s_x} dt)>0
  \end{align}
\end{lem}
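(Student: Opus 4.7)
The plan is to reduce the general equation \eqref{eq} to the constant-dispersion equation \eqref{cd} via the diffeomorphism $y(x,t)$ from Definition \ref{y} and Lemma \ref{x}, apply Theorem \ref{illc} in the $y$-coordinates, and pull the resulting counterexample back to $x$-coordinates. The upgrade from $L^2$ to general $H^s$ will exploit the fact that the wave packets $v_n = e^{iS_n} w_n$ constructed in Theorem \ref{illc} are frequency-localized at $\xi_n \to \infty$.

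First, I would verify that $(A3N')$ for the original coefficients $a_2$, $a_3$ is equivalent to condition \eqref{a2c} for the transformed coefficient $c_2 = a_2 a_3^{-2/3}$ from \eqref{c2}. Since $dy = a_3^{-1/3}(x,0)\,dx$ at $t = 0$, one has $c_2(y',0)\,dy' = \frac{a_2(x',0)}{a_3(x',0)}\,dx'$, and by $(A1)$ the map $x \mapsto y(x,0)$ is a bi-Lipschitz homeomorphism of $\R$, so the supremum conditions in $(A3N')$ and \eqref{a2c} match. I then apply Theorem \ref{illc} to \eqref{cdc} to obtain $v_n \in C^1_t L^2_y \cap C^0_t H^3_y$, forcings $g_n$, and times $t_n \to 0$ satisfying \eqref{ille}--\eqref{gos}, and define $u_n$ via \eqref{illg}. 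By the derivation of \eqref{cdc}, $u_n$ solves $(\dt + L) u_n = f_n$ with $f_n$ related to $g_n$ by the same change of variables, and \eqref{cm}, \eqref{fg} translate the $L^2$ ill-posedness bound for $v_n$ into the $L^2_x$ analogue of \eqref{N}.

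To upgrade to $H^s_x$ for general $s \in \R$, I would use that $v_n$ is essentially a wave packet at frequency $\xi_n$: since $w_n$ is smooth with compact support in a set independent of $\xi_n$, the Fourier transform $\widehat{v_n}(\eta,t) = \hat w_n(\eta-\xi_n,t)$ is Schwartz-concentrated within $O(\eta_n^{-1})$ of $\xi_n$, so that $\norm{v_n(t)}_{H^s_y} \approx \xi_n^s \norm{v_n(t)}_{L^2_y}$ with comparability constants independent of $\xi_n$ once $\xi_n \gg \eta_n^{-1}$. The same holds for $g_n$. The change of variables $y(\cdot,t)$ is a uniformly smooth diffeomorphism on the compact support of $u_n(\cdot,t)$ by $(A1)$--$(A2)$, so composition with it preserves $H^s$ regularity with bounded comparability constants uniformly in $t \in [0,t_n]$. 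Multiplying both sides of the $L^2_x$ ill-posedness bound by the common factor $\xi_n^s$ then yields \eqref{N}, provided $\xi_n$ is taken large enough (depending on $n$, $s$, and the coefficients).

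The main obstacle is the interplay between the Sobolev norm scaling at high frequency and the change of variables $x \mapsto y(x,t)$: one must verify that the ratio $\norm{u_n(t)}_{H^s_x}/\norm{u_n(t)}_{L^2_x}$ stays comparable to $\xi_n^s$ uniformly in $t \in [0,t_n]$, even as the wave packet is transported at speed $3\xi_n^2$ and the diffeomorphism $y(\cdot,t)$ depends on $t$. Because the $x$-supports of $u_n(\cdot,t)$ and $f_n(\cdot,t)$ for $0 \le t \le t_n$ are contained in a fixed compact set independent of $\xi_n$ (by the support analysis in the proof of Theorem \ref{illc} and the bi-Lipschitz nature of $y$), and $y(\cdot,t)$ is uniformly smooth on this set, these technicalities reduce to a standard application of the Sobolev composition rule for smooth diffeomorphisms, together with the pseudolocal character of $J^s$.
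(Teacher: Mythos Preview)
Your reduction to constant dispersion and the $L^2$ case via Theorem \ref{illc} matches the paper exactly, but your $H^s$ upgrade is genuinely different. The paper does \emph{not} use frequency localization of the wave packet; instead it commutes $J^s$ past $L$ via Lemma \ref{Ls}, writing $J^s(\dt+L)=(\dt+P+A_s)J^s$ with $P$ a differential operator still satisfying $(A1)$, $(A2)$, $(A3N')$ and $A_s\in S^0$, reruns the $L^2$ construction \eqref{un} for $P$ to obtain $\tld u_n$, and sets $w_n:=J^{-s}\tld u_n$ so that $\norm{w_n}_{H^s}=\norm{\tld u_n}_{L^2}$ \emph{exactly}; the zero-order error $A_s\tld u_n$ is then absorbed using the smallness estimate \eqref{gos}. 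Your route keeps $w_n=u_n$ and replaces this algebra by the comparison $\norm{u_n}_{H^s}\approx\xi_n^{s}\norm{u_n}_{L^2}$, which is more elementary (no $\Psi$DO commutator expansion, no explicit use of \eqref{gos}) but requires checking that the comparability constants are uniform in $n$. They are, because $(A1)$--$(A2)$ give \emph{global} bounds on all derivatives of $a_3$, so the $H^s$ operator norms of composition with $y(\cdot,t)$ and multiplication by $a_3^{1/3}$ are bounded independently of the $n$-dependent support---a point worth stating explicitly, since your phrase ``on the compact support of $u_n$'' might suggest otherwise. One further detail to spell out: the amplitude in $g_n=e^{iS_n}h_n$ depends linearly on $\xi_n$, so to justify $\norm{g_n}_{H^s}\approx\xi_n^s\norm{g_n}_{L^2}$ you should split $h_n=\xi_n h_n^{(1)}+h_n^{(0)}$ with each $h_n^{(j)}$ having $\xi_n$-independent $C^k$ norms.
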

Note, that \eqref{a2c} for $c_2(y,t)$ defined by \eqref{c2} is equivalent to \textbf{(A3N')}. Therefore, Theorem \ref{illc} applies to \eqref{cdc}. Define $u_n$ by applying \eqref{illg} to $v_n$ from the Theorem \eqref{illc}, which can be written explicitly as
\begin{align}\label{un}
  u_n(x,t) = a_3^{\frac{1}{3}}(x,t)e^{iy\xi_n+it\xi_n^3} e^{\frac{1}{3}\int_x^{x_0} \frac{a_2}{a_3}(x',t)dx'}\psi_n( y+ 3\xi_n^2 t)
\end{align}
 Let $f_n=\dt u_n+ Lu_n$ and $g_n$ defined by \eqref{c2}. Then \eqref{ille}, \eqref{cm} and \eqref{fg} imply up to a subsequence
 \begin{align}\label{illu}
   \norm{u_n(t_n)}_{L^2_x} \ge n(\norm{u_n(0)}_{L^2_x}+\int_0^{t_n}\norm{(\dt + L)u_n(t)}_{L^2_x} dt)>0
 \end{align}
 Likewise, \eqref{gos} holds for $u_n$ instead of $v_n$. This completes the proof of \eqref{N} for $s=0$ by taking $w_n:=u_n$.\\

For the general $s\in \R$, commute $J^s$ with $L$ as in Lemma \ref{Ls}: $J^s(\dt + L) = (\dt + \tld L)J^s$, where $\tld L = L + [J^{s} L]J^{-s}$. By Lemma \ref{Ls} $\tld L = P +A_s(x,t,\dx)$, where $A_s\in S^0$ and the differential operator $P$ satisfies (A1), (A2) and (A3N'). Define $\tld u_n$ via \eqref{un} with $L$ replaced by $P$. I.e. $\tld u_n$ differs from $u_n$ by a factor of $(\frac{a_3(x_0,t)}{a_3(x,t)})^s$. Further define
\begin{align*}
  w_n(x,t)= J^{-s} \tld u_n(x,t)
\end{align*}
Hence $\norm{w_n(t)}_{H^s}=\norm{u_n(t)}_{L^2}$. Applying \eqref{illu} to $\tld u_n$ implies
\begin{align*}
      \norm{w_n(t_n)}_{H^s} \ge n(\norm{w_n(0)}_{H^s_x}+\int_0^{t_n}\norm{(\dt + P)\tld u_n(t)}_{L^2_x} dt)>0
\end{align*}
By \eqref{gos} for $n\ge \norm{A_s}_{L^2\to L^2}$
  \begin{align*}
     \int_0^{t_n} \norm{A_s \tld u_n}_{L^2} \le \norm{w_n(0)}_{H^s_x}
  \end{align*}
  As $J^s f = (\dt +P)\tld u_n + A_s\tld u_n$, combining the last two estimates implies that $w_n$ satisfies \eqref{N} by passing to a subsequence.
\subsection{Proof of illposedness}\label{reds}
\begin{coro}\label{illn}
  Lemma \ref{Negc} implies that \eqref{Hs} fails or, more generally, for any $T>0$ there is no non-decreasing function $C(T'):[0,T]\to\R$, such that
  \begin{align}\label{well}
    \sup_{0\le t\le T'}\norm{u(t)}_{H^s} \le C(T')(\norm{u_0}_{H^s}+\int_0^{T'}\norm{f(t)}_{H^s} dt)
  \end{align}
  holds for all $u\in C^0_{[0,T]}H^s$ solving \eqref{eq}.
\end{coro}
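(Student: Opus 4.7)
The plan is to argue by contradiction, feeding the counterexample sequence supplied by Lemma \ref{Negc} directly into the hypothesized continuity estimate. First I would fix $s\in\R$ and $T>0$ and suppose, toward contradiction, that some non-decreasing function $C(T'):[0,T]\to\R$ makes \eqref{well} hold for every $u\in C^0_{[0,T]}H^s$ solving \eqref{eq}. The candidate counterexamples are the functions $w_n$ from Lemma \ref{Negc}: setting $u_{0,n}:=w_n(0)$ and $f_n:=(\dt+L)w_n$, I would check that the triple $(w_n,u_{0,n},f_n)$ is admissible for \eqref{well}. The only point to verify is that $f_n$ lies in $L^1_{[0,T]}H^s$, and this follows immediately from the regularity $w_n\in C^1_tH^s\cap C^0_tH^{s+3}$ guaranteed by Lemma \ref{Negc} together with the coefficient regularity $(A2)$, since then $f_n\in C^0_{[0,T]}H^s$ on any compact subinterval.

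Next, since $t_n\to 0$, for all sufficiently large $n$ the endpoint $t_n$ lies in $[0,T]$, so I can apply \eqref{well} with $T'=t_n$ to obtain an upper bound on $\norm{w_n(t_n)}_{H^s}$ by $C(t_n)\bigl(\norm{w_n(0)}_{H^s}+\int_0^{t_n}\norm{f_n(t)}_{H^s}dt\bigr)$. Comparing this upper bound with the lower bound \eqref{N} from Lemma \ref{Negc}, and invoking the strict positivity of the bracketed quantity in \eqref{N}, I would divide through to obtain $n\le C(t_n)$. Monotonicity of $C$ and $t_n\le T$ then yield $n\le C(T)$ for all large $n$, contradicting the finiteness of $C(T)$. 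The specific estimate \eqref{Hs} corresponds to the non-decreasing choice $C(T')=Ce^{CT'}$, so its failure is an immediate special case of the general statement.

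I do not expect any substantive obstacle in the corollary itself; the heavy lifting has already been done in Theorem \ref{illc} and Lemma \ref{Negc}, and the present argument is a short packaging step that converts the counterexample sequence into the desired negative statement. The mildly delicate point is only the use of monotonicity of $C$ on $[0,T]$: without monotonicity one would have to argue separately that $C(t_n)$ stays bounded as $t_n\to 0$, but with monotonicity the bound $C(t_n)\le C(T)$ is automatic and the contradiction is immediate.
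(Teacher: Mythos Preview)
Your proposal is correct and follows essentially the same contradiction argument as the paper: assume \eqref{well}, plug in the sequence $w_n$ from Lemma \ref{Negc} with $T'=t_n$, and deduce $C(t_n)\ge n$, which together with monotonicity forces $C$ to be unbounded. The only cosmetic difference is that the paper uses monotonicity in the other direction (for any fixed $t>0$ eventually $t_n<t$, so $C(t)\ge C(t_n)\ge n$), but this is the same idea.
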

\begin{proof}
  Assuming \eqref{well}, for the sake of contradiction, and using \eqref{N}, implies that $C(t_n)\ge n$ for all $n\in\N$. As $t_n\to 0$ and $C(t)$ is non-decreasing in $t$, $C(t)\ge n$ for all $t>0$ and $n\in \N$. This is a contradiction.
\end{proof}
\begin{coro}
  Assuming (A1), (A2) and (A3N) implies, that \eqref{eq} is ill-posed in $H^s$ the sense of Theorem \ref{Neg}.
\end{coro}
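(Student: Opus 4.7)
The plan is to chain Remark \ref{a3+}, Lemma \ref{Negc}, and Corollary \ref{illn} in sequence, while invoking -- not re-proving -- the equivalence already declared in the statement of Theorem \ref{Neg}.

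First, I will apply Remark \ref{a3+} to reduce the hypotheses: we may assume $a_3 > 0$ without loss of generality provided (A3N) is replaced by (A3N'). Hence the standing assumptions (A1), (A2), (A3N) entail (A1), (A2), (A3N'), which is exactly the setup of Lemma \ref{Negc}. Next, I will invoke Lemma \ref{Negc} to obtain, for each $s \in \R$, a sequence $w_n \in C^1_{[0,T]} H^s \cap C^0_{[0,T]} H^{s+3}$ and times $t_n \to 0$ verifying \eqref{N}. Finally, feeding this sequence into Corollary \ref{illn} rules out every non-decreasing $C(T'):[0,T] \to \R$ for which the inhomogeneous bound \eqref{well} could hold. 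Since \eqref{Hs} is of precisely this shape with the non-decreasing choice $C(T') = C e^{CT'}$, estimate \eqref{Hs} fails on every interval $[0,T]$.

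What remains is a pure reference, and this is where the previous version of the proof overreached: the last sentence of Theorem \ref{Neg} already equates failure of \eqref{Hs} on every $[0,T]$ with the non-existence of a continuous $C(t,t_0)$ validating \eqref{ill} on $f \equiv 0$ solutions, so I will cite this equivalence rather than re-derive it through a propagator or Duhamel argument. The only modest bookkeeping obstacle is in the first step, where I must verify that the single-sided integral hypothesis (A3N) (with absolute values on $a_3$) reduces to the disjunction (A3N') under the reflection $x \to -x$; this is exactly the content of Remark \ref{a3+}. All of the genuine analytical substance -- the geometrical-optics construction, the flattening change of variables, and the conjugation by $J^s$ -- has already been absorbed into Lemma \ref{Negc}, so the corollary itself is essentially a reference-chaining exercise.
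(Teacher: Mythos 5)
Your chaining of Remark \ref{a3+}, Lemma \ref{Negc}, and Corollary \ref{illn} matches the paper's structure up to the last step, but the last step contains a genuine circularity. You propose to ``cite the equivalence already declared in the statement of Theorem \ref{Neg}'' --- namely the sentence ``Equivalently \eqref{Hs} fails on any $[0,T]$'' --- instead of running a propagator or Duhamel argument. But that sentence is part of the theorem this corollary is proving; it is a claim to be established, not a previously proved result that can be invoked. Nowhere else in the paper is that equivalence justified: the only place it is proved is precisely in the paper's proof of this corollary, and the proof is exactly the Duhamel argument you declined to give. So you are assuming the conclusion.

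The direction you actually need is nontrivial and points the wrong way for a free citation. Lemma \ref{Negc} produces $w_n$ that are \emph{approximate} homogeneous solutions: they carry a small but nonzero forcing $f_n=(\dt+L)w_n$. Consequently Corollary \ref{illn} only defeats the inhomogeneous estimate \eqref{well}; it says nothing directly about \eqref{ill}, which quantifies only over exact solutions with $f\equiv 0$. The easy implication (\eqref{well} $\Rightarrow$ \eqref{ill}, by setting $f\equiv 0$) is useless here; what is required is that a continuous $C(t,t_0)$ validating \eqref{ill} would force a non-decreasing $C(T')$ validating \eqref{well}. The paper gets this by writing every solution as $u(t)=S(t,0)u_0+\int_0^t S(t,t_0)f(t_0)\,dt_0$ and using $\sup_{0\le t_0\le t\le T'}\norm{S(t,t_0)}\le C(T')$ with $C(T')=\sup_{0\le t_0\le t\le T'}C(t_0,t)$. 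That Duhamel step is the entire content of this corollary's proof and must be supplied; the rest of your outline (the reduction via Remark \ref{a3+} and the appeal to Lemma \ref{Negc} and Corollary \ref{illn}) is correct and consistent with the paper.
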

\begin{proof}
  Suppose, for the sake of contradiction, that \eqref{ill} holds for some $[0,T]$ and some continuous function $C(t_0,t)$ for $0\le t_0\le t\le T$. Define a non-decreasing function $C(T')=\sup_{0\le t_0 \le t \le T'} C(t_0,t)$. Then by the Duhamel principle every solution of \eqref{eq} satisfies
  \begin{align*}
    u(t) = S(t,0)u_0 + \int_0^t S(t,t_0) f(t_0) dt_0
  \end{align*}
  where $u(t)=S(t,t_0)g$ solves \eqref{eq} on $[t_0,T]$ with data $u(t_0)=g$ and $f\equiv 0$. Moreover,
  \begin{align*}
    \sup_{0\le t_0 \le t \le T'}\norm{S(t,t_0)} \le C(T')
  \end{align*}
  Thus the Duhamel principle implies \eqref{well} for all $u\in C^0_{[0,T]}H^s$ solutions of \eqref{eq}, which contradicts the Corollary \ref{illn}.
\end{proof}


\bibliographystyle{amsplain}

\begin{thebibliography}{10}

\bibitem{Akh13}
Timur Akhunov, \emph{Local well-posedness of quasi-linear systems generalizing
  kdv}, Commun. Pure Appl. Anal. \textbf{12} (2013), no.~2, to appear in print.

\bibitem{AmWr12}
D.~M. {Ambrose} and J.~D. {Wright}, \emph{{Dispersion vs. anti-diffusion:
  well-posedness in variable coefficient and quasilinear equations of
  KdV-type}}, ArXiv e-prints (2012).

\bibitem{CKS92}
W.~Craig, T.~Kappeler, and W.~Strauss, \emph{Gain of regularity for equations
  of {K}d{V} type}, Ann. Inst. H. Poincar\'e Anal. Non Lin\'eaire \textbf{9}
  (1992), no.~2, 147--186. \MR{1160847 (93j:35153)}

\bibitem{CrGdm90}
Walter Craig and Jonathan Goodman, \emph{Linear dispersive equations of {A}iry
  type}, J. Differential Equations \textbf{87} (1990), no.~1, 38--61.
  \MR{1070026 (91j:35238)}

\bibitem{CKSt95}
Walter Craig, Thomas Kappeler, and Walter Strauss, \emph{Microlocal dispersive
  smoothing for the {S}chr\"odinger equation}, Comm. Pure Appl. Math.
  \textbf{48} (1995), no.~8, 769--860. \MR{1361016 (96m:35057)}

\bibitem{Doi94}
Shin-ichi Doi, \emph{On the {C}auchy problem for {S}chr\"odinger type equations
  and the regularity of solutions}, J. Math. Kyoto Univ. \textbf{34} (1994),
  no.~2, 319--328. \MR{1284428 (95g:35190)}


\bibitem{Hor:v3}
Lars H{\"o}rmander, \emph{The analysis of linear partial differential
  operators. {III}}, Grundlehren der Mathematischen Wissenschaften [Fundamental
  Principles of Mathematical Sciences], vol. 274, Springer-Verlag, Berlin,
  1985, Pseudodifferential operators. \MR{781536 (87d:35002a)}

\bibitem{KPV2004}
Carlos~E. Kenig, Gustavo Ponce, and Luis Vega, \emph{The {C}auchy problem for
  quasi-linear {S}chr\"odinger equations}, Invent. Math. \textbf{158} (2004),
  no.~2, 343--388. \MR{2096797 (2005f:35283)}

\bibitem{Miz85}
Sigeru Mizohata, \emph{On the {C}auchy problem}, Notes and Reports in
  Mathematics in Science and Engineering, vol.~3, Academic Press Inc., Orlando,
  FL, 1985. \MR{860041 (89a:35007)}

\bibitem{Stein93}
Elias~M. Stein, \emph{Harmonic analysis: real-variable methods, orthogonality,
  and oscillatory integrals}, Princeton Mathematical Series, vol.~43, Princeton
  University Press, Princeton, NJ, 1993, With the assistance of Timothy S.
  Murphy, Monographs in Harmonic Analysis, III. \MR{1232192 (95c:42002)}


\bibitem{Whit}
G.~B. Whitham, \emph{Linear and nonlinear waves}, Pure and Applied Mathematics
  (New York), John Wiley \& Sons Inc., New York, 1999, Reprint of the 1974
  original, A Wiley-Interscience Publication. \MR{1699025 (2000c:35001)}

\end{thebibliography}
\providecommand{\bysame}{\leavevmode\hbox to3em{\hrulefill}\thinspace}
\providecommand{\MR}{\relax\ifhmode\unskip\space\fi MR }
\providecommand{\MRhref}[2]{%
  \href{http://www.ams.org/mathscinet-getitem?mr=#1}{#2}
}
\providecommand{\href}[2]{#2}


\end{document}